\newtheorem{theorem}{Theorem}
\newtheorem{property}{Property}
\newtheorem{lemma}{Lemma}
\newtheorem{definition}{Definition}
\newcommand{\N}{\mathbb{N}}
\newcommand{\R}{\mathbb{R}}
\newcommand{\Il}{I^{\alpha}_{0,+}}
\newcommand{\Dl}{{^{c}}D^{\alpha}_{0,+}}
\newcommand{\seq}{y_m}
\newcommand{\fonction}[5]{\begin{array}[t]{lrcl}#1 :&#2 &\longrightarrow &#3\\&#4& \longmapsto &#5 \end{array}}
\begin{document}

\title{Variational Methods for the Fractional \\
Sturm--Liouville Problem}

% ------------------------

\author{Ma\l gorzata Klimek$^1$\\
\texttt{mklimek@im.pcz.pl}
\and
Tatiana Odzijewicz$^2$\\
\texttt{tatianao@ua.pt}
\and
Agnieszka B. Malinowska$^3$\\
\texttt{a.malinowska@pb.edu.pl}}

% --------------------------

\date{$^1$Institute of Mathematics, Czestochowa University of Technology\\
42-200 Cz\k{e}stochowa, Poland\\[0.3cm]
$^2$Center for Research and Development in Mathematics and Applications\\
Department of Mathematics, University of Aveiro, 3810-193 Aveiro, Portugal\\[0.3cm]
$^3$Faculty of Computer Science, Bialystok University of Technology\\
15-351 Bia\l ystok, Poland}

% ------------------------

\maketitle

% ------------------------

\begin{abstract}
This article is devoted to the regular fractional Sturm--Liouville eigenvalue problem. Applying methods of fractional variational analysis we prove existence of countable set of orthogonal solutions and corresponding eigenvalues. Moreover, we formulate two results showing that the lowest eigenvalue is the minimum value for a certain variational functional.
\bigskip

\noindent \textbf{Keywords: }fractional calculus; fractional variational analysis; fractional Sturm--Liouville problem.

\bigskip

\noindent \textbf{2010 Mathematics Subject Classification: } 26A33, 49R02, 47A75.

\end{abstract}

% ------------------------

\section*{Introduction}

In 1836--1837 French mathematicians Sturm (1803-1853) and Liouville (1809-1855) published series of articles initiating the new subtopic of mathematical analysis - the Sturm--Liouville theory. It deals with the general linear, homogeneous second-order ordinary differential equation of the form
\begin{equation}\label{eq:SLI}
\frac{d}{dx}\left(p(x)\frac{dy}{dx}\right)+q(x)y=\lambda w(x)y,
\end{equation}
where $x\in[a,b]$ and in any particular problem functions $p(x)$, $q(x)$ and $w(x)$ are known. In addition, certain boundary conditions are attached to equation \eqref{eq:SLI}. For specific choices of the boundary  conditions, nontrivial solutions of \eqref{eq:SLI} exist only for particular values of the parameter $\lambda=\lambda^{(m)}$, $m=1,2,\dots$. Constants $\lambda^{(m)}$ are called eigenvalues and corresponding solutions $y^{(m)}(x)$ are called eigenfunctions. For a deeper discussion of the classical Sturm--Liouville theory we refer the reader to \cite{book:GF,sagan,book:Brunt}.

Recently many researchers have focused their attention on certain generalization of Sturm--Liouville problem. Namely, they are interested in equations of the type \eqref{eq:SLI}, however with fractional differential operators (see e.g., \cite{AlM09,QMQ,Malgorzata2,Kli13,Kli13a,Lin,Leo11,Nea09,Ovi12,QCH}) which are integrals and derivatives of arbitrary real or complex order. As mentioned in \cite{isi},
Science Watch of Thomson Reuters identified the fractional calculus as an Emerging Research Front area. This is due to its many applications in science and  engineering. A comprehensive study of the fractional calculus and its applications can be found in several recent books \cite{book:Kilbas,book:Kiryakova,book:Malgorzata,book:Podlubny}. In this note we develop the fractional Sturm--Liouville theory by studying the Sturm--Liouville eigenvalue problem with Caputo fractional derivatives. We shall show that fractional variational principles are useful for the approximation of eigenvalues and eigenfunctions.
Traditional Sturm--Liouville theory does not depend upon the calculus of variations, but
stems from the theory of ordinary linear differential equations. However
the Sturm--Liouville eigenvalue problem is readily formulated as a constrained
variational principle, and this formulation can be used to approximate the
solutions. We emphasize that it has a special importance for the fractional Sturm--Liouville equation since fractional operators are nonlocal and it can be extremely challenging to find analytical solutions to fractional differential equations. Besides allowing convenient approximations many general properties of the eigenvalues can be derived using the variational principle. 

The paper is organized as follows. In Section~\ref{sec:prel} a brief review of the fractional calculus is given and three technical lemmas are shown. Our main results are then formulated and proved in Section~\ref{sec:mr}: we show the existence of orthogonal solutions to the fractional Sturm--Liouville eigenvalue problem (Theorem~\ref{thm:exist}), prove that lowest eigenvalues can be characterized as minimum values of certain functionals (Theorem~\ref{thm:FE} and Theorem~\ref{thm:RQ}). In Subsection~\ref{Example} we illustrate our results through an example. Finally in Appendix, reader can find two lemmas concerning certain convergence properties of fractional and classical derivatives, that play an important role in the proof of Theorem~\ref{thm:exist}.
%---------------------------------------------

\section{Preliminaries}
\label{sec:prel}
The reference books for the fractional calculus are \cite{book:Kilbas,book:Kiryakova,book:Malgorzata,book:Podlubny}. Here we only recall necessary definitions and properties of fractional operators. Moreover, we prove lemmas that will be used in the proof of the main result, Theorem~\ref{thm:exist}.

\noindent

\begin{definition}[Left and right Riemann--Liouville fractional integrals]
We define the left and ~the right Riemann--Liouville fractional integrals
$\textsl{I}_{a+}^{\alpha}$ and $\textsl{I}_{b-}^{\alpha}$ of order $\alpha\in\R$ ($\alpha >0$) by
\begin{equation}\label{eq:def:lRLI}
\textsl{I}_{a+}^{\alpha} f(x):=\frac{1}{\Gamma(\alpha)}\int_a^x \frac{f(t)dt}{(x-t)^{1-\alpha}},~~x\in (a,b],
\end{equation}
and
\begin{equation}\label{eq:def:rRLI}
\textsl{I}_{b-}^{\alpha} f(x):=\frac{1}{\Gamma(\alpha)}\int_x^b \frac{f(x)dt}{(t-x)^{1-\alpha}},~~x\in [a,b),
\end{equation}
respectively. Here $\Gamma(\alpha)$ denotes the Euler's gamma function.
\end{definition}

The following assertion shows that Riemann--Liouville fractional integrals satisfy semigroup property.

\begin{property}[cf. Lemma~2.3 \cite{book:Kilbas}]
Let $\alpha,\beta>0$ and $f\in L^p(a,b)$, ($1\leq p\leq\infty$). Then, equations
\begin{equation*}
\textsl{I}_{a+}^{\alpha}\textsl{I}_{a+}^{\beta} f(x)=\textsl{I}_{a+}^{\alpha+\beta} f(x),
\end{equation*}
and
\begin{equation*}
\textsl{I}_{b-}^{\alpha}\textsl{I}_{b-}^{\beta} f(x)=\textsl{I}_{b-}^{\alpha+\beta} f(x)
\end{equation*}
are satisfied.
\end{property}

\begin{definition}[Left and right Riemann--Liouville fractional derivatives]
The left Riemann--Liouville fractional derivative
of order $\alpha\in\R$ ($0<\alpha <1$) of a function $f$, denoted by $\textsl{D}_{a+}^{\alpha} f$,
is defined by
\begin{equation*}
\forall x\in(a,b],~~\textsl{D}_{a+}^{\alpha} f(x)
:= D\textsl{I}_{a+}^{1-\alpha}f(x).
\end{equation*}
Similarly, the right Riemann--Liouville fractional derivative of order $\alpha$
of a function $f$, denoted by $\textsl{D}_{b-}^{\alpha} f$,
is defined by
\begin{equation*}
\forall x\in[a,b),~~\textsl{D}_{b-}^{\alpha} f(x)
:= -D\textsl{I}_{b-}^{1-\alpha}f(x),
\end{equation*}
where $D=\frac{d}{dx}$.
\end{definition}

As one can see below Riemann--Liouville fractional integral and differential operators of power functions return power functions.

\begin{property}[cf. Property~2.1 \cite{book:Kilbas}]
Now, let $\alpha,\beta>0$, then the following identities hold:
\begin{equation*}
\textsl{I}_{a+}^{\alpha}(t-a)^{\beta-1}(x)=\frac{\Gamma(\beta)}{\Gamma(\beta+\alpha)}(x-a)^{\beta+\alpha-1},
\end{equation*}
\begin{equation*}
\textsl{D}_{a+}^{\alpha}(t-a)^{\beta-1}(x)=\frac{\Gamma(\beta)}{\Gamma(\beta-\alpha)}(x-a)^{\beta-\alpha-1},
\end{equation*}
\begin{equation*}
\textsl{I}_{b-}^{\alpha}(b-t)^{\beta-1}(x)=\frac{\Gamma(\beta)}{\Gamma(\beta+\alpha)}(b-x)^{\beta+\alpha-1},
\end{equation*}
and
\begin{equation*}
\textsl{D}_{b-}^{\alpha}(b-t)^{\beta-1}(x)=\frac{\Gamma(\beta)}{\Gamma(\beta-\alpha)}(b-x)^{\beta-\alpha-1}.
\end{equation*}
\end{property}

\begin{definition}[Left and right Caputo fractional derivatives]
The left and the right Caputo fractional derivatives of order $\alpha\in\R$ ($0<\alpha <1$) are given by
\begin{equation}\label{eq:1}
\forall x \in (a,b], ~~{^{c}}D^{\alpha}_{a+} f(x):=\textsl{D}_{a+}^{\alpha}\left [ f(x)-f(a)\right],
\end{equation}
\begin{equation}\label{eq:1r}
\forall x \in [a,b),~~{^{c}}D^{\alpha}_{b-} f(x):=\textsl{D}_{b-}^{\alpha} \left [f(x)-f(b)\right].
\end{equation}
\end{definition}
Let $0<\alpha<1$ and $f\in AC[a,b]$, then  Caputo fractional derivatives satisfy the following relations:
\begin{equation*}
{^{c}}D^{\alpha}_{a+} f(x)=\textsl{I}_{a+}^{1-\alpha} Df(x)
\end{equation*}
and
\begin{equation*}
{^{c}}D^{\alpha}_{b-} f(x)=-\textsl{I}_{b-}^{1-\alpha} Df(x),
\end{equation*}
respectively.

\begin{property}[cf. Lemma~2.4 \cite{book:Kilbas}]
If $\alpha>0$ and $f\in L^p(a,b)$, ($1\leq p\leq\infty$), then the following is true:
\begin{equation*}
\textsl{D}_{a+}^{\alpha}\textsl{I}_{a+}^{\alpha} f(x)=f(x),
\end{equation*}
\begin{equation*}
\textsl{D}_{b-}^{\alpha}\textsl{I}_{b-}^{\alpha} f(x)=f(x),
\end{equation*}
for almost all $x\in [a,b]$. If function $f$ is continuous, then composition rules hold for all $x\in [a,b]$.
\end{property}

The above property shows that the Riemann--Liouville derivative is the left inverse of the Riemann--Liouville integral, but we cannot claim that it is the right inverse. More precisely, for $1>\alpha>0$ we have the following situation.

\begin{property}[cf. Lemma~2.5 and Lemma~2.6, \cite{book:Kilbas}]
If $f\in L^1(a,b)$ and $\textsl{I}_{a+}^{1-\alpha}f,\textsl{I}_{b-}^{1-\alpha}f\in AC[a,b]$, then the following is true:
\begin{equation}\label{eq:2}
\textsl{I}_{a+}^{\alpha}\textsl{D}_{a+}^{\alpha} f(x)=f(x)-\frac{(x-a)^{\alpha-1}}{\Gamma(\alpha)}\textsl{I}_{a+}^{1-\alpha}(a),
\end{equation}
\begin{equation}\label{eq:2r}
\textsl{I}_{b-}^{\alpha} \textsl{D}_{b-}^{\alpha} f(x)=f(x)-\frac{(b-x)^{\alpha-1}}{\Gamma(\alpha)}\textsl{I}_{b-}^{1-\alpha}(b).
\end{equation}
\end{property}

Next results show that for certain classes of functions Caputo fractional derivatives are inverse operators of Riemann--Liouville fractional integrals.

\begin{property}[cf. Lemma~2.21 \cite{book:Kilbas}]
\label{prop:4}
Let $\alpha>0$ and $\alpha\in\N$ or $\alpha\notin\N$. If $f$ is continuous on the interval $[a,b]$, then
\begin{equation*}
{^{c}}D^{\alpha}_{a+}\textsl{I}_{a+}^{\alpha} f(x)=f(x),
\end{equation*}
\begin{equation*}
{^{c}}D^{\alpha}_{b-}\textsl{I}_{b-}^{\alpha} f(x)=f(x).
\end{equation*}
\end{property}

\begin{property}[cf. Lemma~2.22 \cite{book:Kilbas}]
%\label{prop:4}
Let $1\geq\alpha>0$. If $f\in AC[a,b]$, then
\begin{equation}\label{eq:3}
\textsl{I}_{a+}^{\alpha}{^{c}}D^{\alpha}_{a+} f(x)=f(x)-f(a),
\end{equation}
\begin{equation}\label{eq:3r}
\textsl{I}_{b-}^{\alpha}{^{c}}D^{\alpha}_{b-} f(x)=f(x)-f(b).
\end{equation}
\end{property}

Note that, if $f(a)=0$, then we have
\begin{equation}%\label{eq:3}
\textsl{I}_{a+}^{\alpha}{^{c}}D^{\alpha}_{a+} f(x)=\textsl{I}_{a+}^{1}\textsl{D} f(x)=f(x)-f(a)=f(x),
\end{equation}
and similarly for  $f(b)=0$ we obtain
\begin{equation}%\label{eq:3r}
\textsl{I}_{b-}^{\alpha}{^{c}}D^{\alpha}_{b-} f(x)=-\textsl{I}_{b-}^{1}\textsl{D} f(x)=f(x)-f(b)=f(x).
\end{equation}

For $p$-Lebesgue integrable functions Riemann--Liouville fractional integrals and derivatives satisfy the following composition properties.

\begin{property}[cf. Property~2.2 \cite{book:Kilbas}]
\label{prop:5}
Let $\alpha>\beta>0$ and $f\in L^p(a,b)$, ($1\leq p\leq\infty$). Then, relations
\begin{equation*}
\textsl{D}_{a+}^{\beta}\textsl{I}_{a+}^{\alpha} f(x)=\textsl{I}_{a+}^{\alpha-\beta} f(x),
\end{equation*}
and
\begin{equation*}
\textsl{D}_{b-}^{\beta}\textsl{I}_{b-}^{\alpha} f(x)=\textsl{I}_{b-}^{\alpha-\beta} f(x)
\end{equation*}
are satisfied for almost all $x\in [a,b]$ (in the case of continuous $f$ they are satisfied for all $x\in [a,b]$). In particular, when $\beta=k\in\N$ and $\alpha>k$, then
\begin{equation*}
D^{k}\textsl{I}_{a+}^{\alpha} f(x)=\textsl{I}_{a+}^{\alpha-k} f(x),
\end{equation*}
and
\begin{equation*}
D^{k}\textsl{I}_{b-}^{\alpha} f(x)=(-1)^k\;\textsl{I}_{b-}^{\alpha-k} f(x).
\end{equation*}
\end{property}

In classical calculus, integration by parts formula relates the integral of a product of functions to the integral of their derivative and antiderivative. As we can see below, this formula works also for fractional derivatives, however it changes the type of differentiation: left Riemann--Liouville fractional derivatives are transformed to right Caputo fractional derivatives.

\begin{property}[cf. Lemma~2.19 \cite{book:Malgorzata}]
Assume that $0<\alpha<1$, $f\in AC[a,b]$ and $g\in L^p(a,b)$, ($1\leq p\leq\infty$). Then, the following integration by parts formula holds
\begin{equation}\label{eq:IBP}
\int_a^b f(x)\textsl{D}_{a+}^{\alpha}g(x)\;dx=\int_a^b g(x){^{c}}D^{\alpha}_{b-}f(x)\;dx+\left.f(x)\textsl{I}_{a+}^{1-\alpha}g(x)\right|_{x=a}^{x=b}.
\end{equation}
\end{property}

Finally, let us recall the following property yielding boundedness of the Riemann--Liouville fractional integral in the $L^p(a,b)$ space (cf. Lemma~2.1 \cite{book:Kilbas}).

\begin{property}
Let $\beta \in \mathbb{R}_{+}$ and $p\geq 1$. The fractional integral operator $I^{\beta}_{a,+}$ is bounded in $L^{p}(a,b)$:
\begin{equation}\label{K}
||I^{\beta}_{a+}f||_{L^{p}}\leq K_{\beta}||f||_{L^{p}}\quad K_{\beta}= \frac{(b-a)^{\beta}}{\Gamma(\beta+1)}.
\end{equation}
\end{property}

Now, we are in position to prove lemmas that will be used in the proof of Theorem~\ref{thm:exist}.

\begin{lemma}\label{lem:A}
Let $\alpha\in (0,1) $ and function $\gamma\in C[a,b]$. If
\begin{equation*}
\int_{a}^{b}\gamma(x)D{}^{c}D^{\alpha}_{a+}h(x)dx=0
\end{equation*}
for each $ h\in  C^{1}[a,b]$ such that $D{}^{c}D^{\alpha}_{a+}h\in C[a,b]$ and
fulfilling boundary conditions
\begin{eqnarray}
&&h(a)=I^{1-\alpha}_{a+}h(b)=0, \label{c1}\\
&& {}^{c}D^{\alpha}_{a+}h(x)|_{x=a}={}^{c}D^{\alpha}_{a+}h(x)|_{x=b}=0,\label{c2}
\end{eqnarray}
then $ \gamma(x)=c_{0}+c_{1}x$,
where $c_{0}, c_{1}$ are some real constants.
\end{lemma}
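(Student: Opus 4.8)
The plan is to reduce the assertion to the classical fundamental lemma of the calculus of variations, by characterizing exactly which continuous functions $g$ can be written as $g = D\,{}^{c}D^{\alpha}_{a+}h$ for an admissible test function $h$.

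\textbf{Step 1 (the admissible class of $g$).} Fix $g\in C[a,b]$, set $G(x):=\int_a^x g(t)\,dt$ (so $G\in C^{1}[a,b]$, $G(a)=0$, $G'=g$) and define $h:=I^{\alpha}_{a+}G$. Since $G$ is continuous, Property~\ref{prop:4} gives ${}^{c}D^{\alpha}_{a+}h={}^{c}D^{\alpha}_{a+}I^{\alpha}_{a+}G=G$, hence $D\,{}^{c}D^{\alpha}_{a+}h=g\in C[a,b]$; moreover, writing $G=I^{1}_{a+}G'$ and using the semigroup property, $h=I^{1+\alpha}_{a+}G'$, so by Property~\ref{prop:5} $h'=I^{\alpha}_{a+}G'\in C[a,b]$ and thus $h\in C^{1}[a,b]$. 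One also checks directly that $h(a)=I^{\alpha}_{a+}G(a)=0$ and ${}^{c}D^{\alpha}_{a+}h(x)|_{x=a}=G(a)=0$, while ${}^{c}D^{\alpha}_{a+}h(x)|_{x=b}=G(b)=\int_a^b g$ and, using the semigroup property once more, $I^{1-\alpha}_{a+}h=I^{1-\alpha}_{a+}I^{\alpha}_{a+}G=I^{1}_{a+}G$, so that $I^{1-\alpha}_{a+}h(b)=\int_a^b G(x)\,dx=\int_a^b (b-t)g(t)\,dt$ by Fubini. Consequently, $h$ satisfies all of \eqref{c1}--\eqref{c2} precisely when $\int_a^b g(t)\,dt=0$ and $\int_a^b t\,g(t)\,dt=0$. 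Feeding such an $h$ into the hypothesis yields
\[
\int_a^b \gamma(x)\,g(x)\,dx=0\qquad\text{for every } g\in C[a,b]\text{ with }\textstyle\int_a^b g=\int_a^b t\,g(t)\,dt=0 .
\]

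\textbf{Step 2 (du Bois-Reymond argument).} Since $1$ and $x$ are linearly independent on $[a,b]$, the $2\times 2$ Gram matrix of $\{1,x\}$ in $L^{2}(a,b)$ is nonsingular, so there exist unique constants $c_{0},c_{1}\in\R$ such that $\psi(x):=\gamma(x)-c_{0}-c_{1}x$ satisfies $\int_a^b\psi=0$ and $\int_a^b x\,\psi(x)\,dx=0$. Then $\psi\in C[a,b]$ is an admissible choice of $g$ in the displayed relation of Step~1, so $\int_a^b\gamma\,\psi=0$; since also $\int_a^b(c_{0}+c_{1}x)\psi(x)\,dx=0$ by the two moment conditions, subtracting gives $\int_a^b\psi(x)^{2}\,dx=0$, hence $\psi\equiv 0$ and $\gamma(x)=c_{0}+c_{1}x$.

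The only point requiring care is Step~1: one must verify that the constructed $h$ genuinely lies in the prescribed regularity class ($h\in C^{1}[a,b]$ with $D\,{}^{c}D^{\alpha}_{a+}h\in C[a,b]$) and fulfils all four boundary conditions, which rests on the composition rules (Properties~\ref{prop:4} and~\ref{prop:5}, equation~\eqref{eq:3}, the semigroup property) together with the elementary fact that $I^{\alpha}_{a+}$ maps $C[a,b]$ into $C[a,b]$ with value $0$ at $a$. Once the admissible set of $g$'s is pinned down to $\{g\in C[a,b]:\int_a^b g=\int_a^b tg=0\}$, the conclusion follows from the classical scheme carried out in Step~2.
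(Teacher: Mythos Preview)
Your proof is correct and follows essentially the same route as the paper: both construct the test function as $h=I^{1+\alpha}_{a+}g$ (you write this as $I^{\alpha}_{a+}G$ with $G=I^{1}_{a+}g$), verify that admissibility amounts to $g$ being orthogonal to $\{1,x\}$, and then take $g=\gamma-c_{0}-c_{1}x$ with the constants chosen to enforce those two moment conditions, concluding from $\int_a^b(\gamma-c_{0}-c_{1}x)^{2}\,dx=0$. The only cosmetic difference is that you first characterize the whole admissible class of $g$'s and then run a du Bois--Reymond argument, whereas the paper plugs in the specific $g=\gamma-c_{0}-c_{1}x$ from the outset.
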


\begin{proof} Let us define function $h$ as follows
\begin{equation} \label{defh1}
 h(x):=I^{1+\alpha}_{a+}\left(\gamma(x)-c_{0}-c_{1}x\right)
\end{equation}
with constants fixed by the conditions
\begin{eqnarray}
&&I^{2}_{a+}\left(\gamma(x)-c_{0}-c_{1}x\right)|_{x=b}=0\label{cn1}\\
&& I^{1}_{a+}\left(\gamma(x)-c_{0}-c_{1}x\right)|_{x=b}=0.\label{cn2}
\end{eqnarray}
Observe that function $h$ is continuous and fulfills the boundary conditions
$$ h(a)=0\hspace{2cm} I^{1-\alpha}_{a+}h(x)|_{x=b}=I^{2}_{a+}\left(\gamma(x)-c_{0}-c_{1}x\right)|_{x=b}=0$$
and
$$ {}^{c}D^{\alpha}_{a+}h(x)|_{x=a}=D^{\alpha}_{a+}h(x)|_{x=a}=DI^{2}_{a+}\left(\gamma(x)-c_{0}-c_{1}x\right)|_{x=a}=$$
$$ =I^{1}_{a+}\left(\gamma(x)-c_{0}-c_{1}x\right)|_{x=a}=0$$
$$ {}^{c}D^{\alpha}_{a+}h(x)|_{x=b}=D^{\alpha}_{a+}h(x)|_{x=b}=DI^{2}_{a+}\left(\gamma(x)-c_{0}-c_{1}x\right)|_{x=b}=$$
$$ =  I^{1}_{a+}\left(\gamma(x)-c_{0}-c_{1}x\right)|_{x=b}=0.$$
In addition
$$ h'(x) = I^{\alpha}_{a+}(\gamma(x)-c_{0}-c_{1}x) \in C[a,b]$$
$$ D{}^{c}D^{\alpha}_{a+}h=\gamma(x)-c_{0}-c_{1}x\in C[a,b].$$
We also have
$$ \int_{a}^{b}\left (\gamma(x)-c_{0}-c_{1}x\right)D{}^{c}D^{\alpha}_{a+}h(x)dx= $$
$$ \int_{a}^{b}\left (-c_{0}-c_{1}x\right)D{}^{c}D^{\alpha}_{a+}h(x)dx=$$
$$ -c_{0}\cdot{}^{c}D^{\alpha}_{a+}h(x)|_{x=a}^{x=b}-c_{1}x\cdot{}^{c}D^{\alpha}_{a+}h(x)|_{x=a}^{x=b}+c_{1}\cdot I^{1-\alpha}_{a+}h(x)|_{x=a}^{x=b}=0.$$
On the other hand
$$ D{}^{c}D^{\alpha}_{a+}h(x)=D{}^{c}D^{\alpha}_{a+}I^{1+\alpha}_{a+}\left(\gamma(x)-c_{0}-c_{1}x\right)=\gamma(x)-c_{0}-c_{1}x$$
and
$$ 0= \int_{a}^{b}\left (\gamma(x)-c_{0}-c_{1}x\right)D{}^{c}D^{\alpha}_{a+}h(x)dx= \int_{a}^{b}\left (\gamma(x)-c_{0}-c_{1}x\right)^{2}dx.$$
Thus function $\gamma$ is
$$ \gamma(x)=c_{0}+c_{1}x.$$
\end{proof}

\begin{lemma}\label{lem:B}
Let $\alpha\in \left(\frac{1}{2},1\right) $, $\gamma\in C[a,b]$ and $D^{1-\alpha}_{a+}\gamma\in L^{2}[a,b]$. If
\begin{equation*}
\int_{a}^{b}\gamma(x)D{}^{c}D^{\alpha}_{a+}h(x)dx=0
\end{equation*}
for each $ h\in  C^{1}[a,b]$ such that $h''\in L^{2}[a,b]$ and $D{}^{c}D^{\alpha}_{a,+}h\in C[a,b]$
fulfilling boundary conditions \eqref{c1}, \eqref{c2}, then $ \gamma(x)=c_{0}+c_{1}x$,
where $c_{0}, c_{1}$ are some real constants.
\end{lemma}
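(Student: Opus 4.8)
The plan is to re-use, essentially unchanged, the test function constructed in the proof of Lemma~\ref{lem:A}, and to supplement that verification with the one extra regularity requirement imposed here, namely $h''\in L^2[a,b]$. Thus I would again set
$$h(x):=I^{1+\alpha}_{a+}\bigl(\gamma(x)-c_0-c_1x\bigr),$$
with $c_0,c_1\in\R$ fixed by $I^2_{a+}(\gamma-c_0-c_1x)|_{x=b}=0$ and $I^1_{a+}(\gamma-c_0-c_1x)|_{x=b}=0$ (these two equations form a linear system in $(c_0,c_1)$ whose $2\times2$ matrix one checks is nonsingular, so the constants exist and are unique). Exactly as in Lemma~\ref{lem:A}, the semigroup property together with the composition rules (Property~\ref{prop:4} and Property~\ref{prop:5}) give $h\in C^1[a,b]$ with $h'(x)=I^\alpha_{a+}(\gamma-c_0-c_1x)(x)$, give ${}^cD^\alpha_{a+}h=I^1_{a+}(\gamma-c_0-c_1x)\in C^1[a,b]$ and hence $D\,{}^cD^\alpha_{a+}h=\gamma-c_0-c_1x\in C[a,b]$, and yield the boundary conditions \eqref{c1} and \eqref{c2}. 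All of these arguments transfer verbatim from the previous proof.

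The only genuinely new point, and the one I expect to be the crux, is showing $h''\in L^2[a,b]$. Starting from $h'=I^\alpha_{a+}(\gamma-c_0-c_1x)$ and using the definition of the left Riemann--Liouville derivative, $h''=D\,I^\alpha_{a+}(\gamma-c_0-c_1x)=D^{1-\alpha}_{a+}(\gamma-c_0-c_1x)=D^{1-\alpha}_{a+}\gamma-c_0\,D^{1-\alpha}_{a+}1-c_1\,D^{1-\alpha}_{a+}(\,\cdot\,)$. The first summand lies in $L^2[a,b]$ by hypothesis. Writing $x=(x-a)+a$ and applying the power-function formula for $\textsl{D}_{a+}^{1-\alpha}$, the remaining two summands form a linear combination of $(x-a)^{\alpha-1}$ and $(x-a)^{\alpha}$. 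Since $\alpha\in\left(\tfrac12,1\right)$ we have $2(\alpha-1)>-1$, so $(x-a)^{\alpha-1}\in L^2[a,b]$ (and trivially $(x-a)^{\alpha}\in L^2[a,b]$); this is precisely the place where the hypothesis $\alpha>\tfrac12$ is needed. Hence $h''\in L^2[a,b]$, and $h$ is an admissible test function in the sense of the lemma.

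It then remains to rerun the concluding computation of Lemma~\ref{lem:A} with this $h$. Because $h$ satisfies \eqref{c1}--\eqref{c2}, integration by parts shows $\int_a^b(c_0+c_1x)\,D\,{}^cD^\alpha_{a+}h(x)\,dx=0$, every boundary term vanishing; so the assumed identity $\int_a^b\gamma(x)\,D\,{}^cD^\alpha_{a+}h(x)\,dx=0$ gives $\int_a^b\bigl(\gamma(x)-c_0-c_1x\bigr)D\,{}^cD^\alpha_{a+}h(x)\,dx=0$. Substituting $D\,{}^cD^\alpha_{a+}h(x)=\gamma(x)-c_0-c_1x$ turns this into $\int_a^b\bigl(\gamma(x)-c_0-c_1x\bigr)^2dx=0$, which forces $\gamma(x)=c_0+c_1x$.
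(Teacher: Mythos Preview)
Your proposal is correct and follows essentially the same approach as the paper: you construct the identical test function $h=I^{1+\alpha}_{a+}(\gamma-c_0-c_1x)$, re-use the verifications from Lemma~\ref{lem:A}, and supply the one new ingredient $h''\in L^2[a,b]$ by computing $h''=D^{1-\alpha}_{a+}(\gamma-c_0-c_1x)$ and invoking $\alpha>\tfrac12$ to ensure $(x-a)^{\alpha-1}\in L^2[a,b]$. The paper's proof is the same, with your added remark on the nonsingularity of the linear system for $(c_0,c_1)$ being a small bonus.
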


\begin{proof}
We define function $h$ as in the proof of Lemma~\ref{lem:A}
\begin{equation}\label{defh2}
 h(x):=I^{1+\alpha}_{a+}\left(\gamma(x)-c_{0}-c_{1}x\right)
\end{equation}
with constants fixed by the conditions (\ref{cn1}) and (\ref{cn2})
The proof of the lemma is analogous to that of  Lemma~\ref{lem:A}.
In addition
for the second order derivative we have
$$ h''(x)=DI^{\alpha}_{a+}(\gamma(x)-c_{0}-c_{1}x)=$$
$$ = D^{1-\alpha}_{a+}\left (\gamma(x)-c_{0}-c_{1}x\right)=$$
$$ = D^{1-\alpha}_{a+}\gamma(x)-(c_{0}+c_{1}a)\frac{(x-a)^{\alpha-1}}{\Gamma(\alpha)}-c_{1}\frac{(x-a)^{\alpha}}{\Gamma(\alpha+1)}.$$
Let us observe that for $\alpha>1/2$
$$ \frac{(x-a)^{\alpha-1}}{\Gamma(\alpha)}\in L^{2}[a,b]$$
$$ \frac{(x-a)^{\alpha}}{\Gamma(\alpha+1)}\in C[a,b]\subset L^{2}[a,b].$$
Thus, we conclude that $h''\in L^{2}[a,b]$ and function $h$ constructed in this proof fulfills
all the assumptions of Lemma~\ref{lem:B}. The remaining part of proof is analogous to that for Lemma~\ref{lem:A}.
\end{proof}

\begin{lemma}\label{lem:C}
(1)
Let $\alpha\in \left (\frac{1}{2},1\right)$,  functions $\gamma_{j}\in C[a,b], \; j=1,2,3$ and $D^{1-\alpha}_{a+}\gamma_{3}\in L^{2}[a,b]$. If
\begin{equation}\label{eq:lem:C}
\int_{a}^{b}\left (\gamma_{1}(x)h(x)+\gamma_{2}(x){}^{c}D^{\alpha}_{a+}h(x)+\gamma_{3}(x)D{}^{c}D^{\alpha}_{a+}h(x)\right)dx=0
\end{equation}
for each $ h\in  C^{1}[a,b]$, such that  $h''\in L^{2}[a,b]$ and $D{}^{c}D^{\alpha}_{a,+}h\in C[a,b]$,
fulfilling boundary conditions \eqref{c1}, \eqref{c2},
then $\gamma_{3}\in C^{1}[a,b]$.\\
(2) Let $\alpha\in \left (\frac{1}{2},1\right)$ and functions $\gamma_{1,2}\in C[a,b]$. If
\begin{equation}\label{eq:lem:C2}
\int_{a}^{b}\left (\gamma_{1}(x)h(x)+\gamma_{2}(x){}^{c}D^{\alpha}_{a+}h(x)\right)dx=0
\end{equation}
for each $ h\in  C^{1}[a,b]$, such that $h''\in L^{2}[a,b]$ and $D{}^{c}D^{\alpha}_{a,+}h\in C[a,b]$,
fulfilling boundary conditions \eqref{c1}, \eqref{c2}
then
\begin{equation*}
\gamma_{1}(x)+{}^{c}D^{\alpha}_{b-}\gamma_{2}(x)=0.
\end{equation*}

\end{lemma}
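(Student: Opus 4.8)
The plan is to treat the two parts by iterated applications of the earlier lemmas, using the construction \eqref{defh2} as a source of rich test functions. For part~(1), I would first integrate by parts in the last term of \eqref{eq:lem:C}. Since $h$ satisfies \eqref{c1} and \eqref{c2}, and (as in Lemma~\ref{lem:A}) $I^{1-\alpha}_{a+}h$ vanishes at both endpoints while ${}^{c}D^{\alpha}_{a+}h$ vanishes at both endpoints, the strategy is to absorb $\gamma_1 h$ and $\gamma_2\,{}^{c}D^{\alpha}_{a+}h$ into a single correction and reduce \eqref{eq:lem:C} to the hypothesis of Lemma~\ref{lem:B}. Concretely, define $\Gamma_2(x):=I^{1}_{a+}\gamma_1(x)$ and $\Gamma_1(x):=I^{2}_{a+}\gamma_1(x)+I^{1}_{a+}\gamma_2(x)$ (adjusting constants so the boundary terms drop), so that repeated integration by parts converts $\int_a^b\gamma_1 h\,dx$ and $\int_a^b\gamma_2\,{}^{c}D^{\alpha}_{a+}h\,dx$ into expressions of the form $\int_a^b\tilde\gamma(x)\,D\,{}^{c}D^{\alpha}_{a+}h(x)\,dx$ modulo vanishing boundary terms. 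The point is that, after this manipulation, \eqref{eq:lem:C} becomes $\int_a^b(\gamma_3(x)-\tilde\gamma(x))\,D\,{}^{c}D^{\alpha}_{a+}h(x)\,dx=0$ for all admissible $h$; one checks $\gamma_3-\tilde\gamma\in C[a,b]$ with $D^{1-\alpha}_{a+}(\gamma_3-\tilde\gamma)\in L^2[a,b]$ (the $\tilde\gamma$ part being built from iterated integrals is smooth), so Lemma~\ref{lem:B} yields $\gamma_3(x)-\tilde\gamma(x)=c_0+c_1 x$. Since $\tilde\gamma$ is a finite sum of Riemann--Liouville integrals of continuous functions, it lies in $C^1[a,b]$, and hence $\gamma_3\in C^1[a,b]$.

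For part~(2), the structure is the same but simpler: there is no third term, so after the same double integration by parts \eqref{eq:lem:C2} becomes $\int_a^b\big(I^{2}_{a+}\gamma_1+I^{1}_{a+}\gamma_2\big)(x)\,D\,{}^{c}D^{\alpha}_{a+}h(x)\,dx=0$ (boundary terms vanishing by \eqref{c1}--\eqref{c2}), whence Lemma~\ref{lem:A} (or Lemma~\ref{lem:B}) gives $I^{2}_{a+}\gamma_1(x)+I^{1}_{a+}\gamma_2(x)=c_0+c_1 x$. Differentiating twice, $\gamma_1(x)+D\,I^{1}_{a+}\gamma_2(x)=0$; but $I^{1}_{a+}\gamma_2(x)=I^{\alpha}_{a+}I^{1-\alpha}_{a+}\gamma_2(x)$ by the semigroup property, and, using $I^{1-\alpha}_{a+}\gamma_2\in AC[a,b]$ together with the relation $D\,I^{1}_{a+}\gamma_2 = D\,I^{\alpha}_{a+}\,I^{1-\alpha}_{a+}\gamma_2$, one identifies $D\,I^{1}_{a+}\gamma_2(x)$ with ${}^{c}D^{\alpha}_{b-}\gamma_2(x)$ up to the correct sign via the integration-by-parts mechanism underlying \eqref{eq:IBP}; more directly, I would test \eqref{eq:lem:C2} against the class of $h$ with $h(a)=h(b)=0$ and $I^{1-\alpha}_{a+}h$ vanishing at the endpoints, write ${}^{c}D^{\alpha}_{a+}h=I^{1-\alpha}_{a+}Dh$, and push the operator onto $\gamma_2$ using \eqref{eq:IBP}, obtaining $\int_a^b\big(\gamma_1(x)+{}^{c}D^{\alpha}_{b-}\gamma_2(x)\big)h(x)\,dx=0$ for a dense enough family of $h$, which forces $\gamma_1+{}^{c}D^{\alpha}_{b-}\gamma_2=0$.

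The main obstacle is bookkeeping the boundary terms: each integration by parts, whether classical or fractional via \eqref{eq:IBP}, produces endpoint contributions of the form $f(x)I^{1-\alpha}_{a+}g(x)\big|_a^b$ or $\big[\,\cdot\,{}^{c}D^{\alpha}_{a+}h\,\big]_a^b$, and one must verify that every such term vanishes under the prescribed conditions \eqref{c1}, \eqref{c2} — in particular that the auxiliary functions $\Gamma_1,\Gamma_2$ can be normalized (by choice of integration constants, exactly as in \eqref{cn1}--\eqref{cn2}) so that no stray boundary term survives. A secondary technical point is checking that the test function $h$ from \eqref{defh2}, which by the computation in Lemma~\ref{lem:B} satisfies $h\in C^1[a,b]$, $h''\in L^2[a,b]$ (here the restriction $\alpha>\tfrac12$ is essential, since $(x-a)^{\alpha-1}\in L^2$ exactly then) and $D\,{}^{c}D^{\alpha}_{a+}h\in C[a,b]$, remains admissible after the perturbations introduced in part~(1); since those perturbations only add iterated integrals of continuous functions, regularity is preserved, so this is routine once the boundary-term accounting is in place.
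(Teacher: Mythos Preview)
Your overall plan --- absorb the $\gamma_1 h$ and $\gamma_2\,{}^{c}D^{\alpha}_{a+}h$ terms into a single expression of the form $\int \tilde\gamma\,D\,{}^{c}D^{\alpha}_{a+}h$ and then invoke Lemma~\ref{lem:B} --- is exactly the paper's strategy. The gap is in the actual form of $\tilde\gamma$. You write $\tilde\gamma = I^{2}_{a+}\gamma_1 + I^{1}_{a+}\gamma_2$, obtained by ``repeated integration by parts''. But classical integration by parts cannot take you from $h$ to $D\,{}^{c}D^{\alpha}_{a+}h$: after one step you reach $h'$, and $h'$ is related to ${}^{c}D^{\alpha}_{a+}h$ by a \emph{fractional} operator ($h' = D^{1-\alpha}_{a+}\,{}^{c}D^{\alpha}_{a+}h$), not by an ordinary derivative. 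The paper instead uses the reconstruction formulas $h = I^{\alpha}_{a+}\,{}^{c}D^{\alpha}_{a+}h$ and ${}^{c}D^{\alpha}_{a+}h = -I^{1}_{b-}\,D\,{}^{c}D^{\alpha}_{a+}h$ (both consequences of \eqref{c1}--\eqref{c2}) together with the duality $\int f\,I^{\alpha}_{a+}g = \int I^{\alpha}_{b-}f\cdot g$, which produces the \emph{right-sided} integral: the correct correction is $\tilde\gamma = I^{1}_{a+}I^{\alpha}_{b-}\gamma_1 + I^{1}_{a+}\gamma_2$.

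For part~(1) this distinction is harmless, since either expression lies in $C^{1}[a,b]$ and the conclusion $\gamma_3\in C^{1}$ follows once Lemma~\ref{lem:B} applies. For part~(2), however, the right-sided integral is essential. From $I^{1}_{a+}I^{\alpha}_{b-}\gamma_1 + I^{1}_{a+}\gamma_2 = c_0 + c_1 x$ one differentiates once to get $I^{\alpha}_{b-}\gamma_1 + \gamma_2 = c_1$, and then applies ${}^{c}D^{\alpha}_{b-}$ (which kills the constant and inverts $I^{\alpha}_{b-}$) to obtain $\gamma_1 + {}^{c}D^{\alpha}_{b-}\gamma_2 = 0$. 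Your route, starting from $I^{2}_{a+}\gamma_1 + I^{1}_{a+}\gamma_2 = c_0 + c_1 x$, gives after two differentiations only $\gamma_1 + \gamma_2' = 0$; the subsequent claim that $D\,I^{1}_{a+}\gamma_2$ can be ``identified with ${}^{c}D^{\alpha}_{b-}\gamma_2$'' is false, since $D\,I^{1}_{a+}\gamma_2 = \gamma_2$. Your fallback (push ${}^{c}D^{\alpha}_{a+}$ onto $\gamma_2$ via \eqref{eq:IBP} directly) presupposes that ${}^{c}D^{\alpha}_{b-}\gamma_2$ exists, which is not assumed --- indeed, its existence is part of what the argument establishes, and emerges precisely from the relation $\gamma_2 = c_1 - I^{\alpha}_{b-}\gamma_1$.
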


\begin{proof}
Observe that integral \eqref{eq:lem:C} can be rewritten as follows
$$\int_{a}^{b}\left (\gamma_{1}(x)h(x)+\gamma_{2}(x){}^{c}D^{\alpha}_{a+}h(x)+\gamma_{3}(x)D{}^{c}D^{\alpha}_{a+}h(x)\right)dx= $$
$$ = \int_{a}^{b}\left (-I^{1}_{a+}I^{\alpha}_{b-}\gamma_{1}(x)-I^{1}_{a+}\gamma_{2}(x)+\gamma_{3}(x)\right )D{}^{c}D^{\alpha}_{a+}h(x)dx=0.$$
due to the fact that relations
$$I^{\alpha}_{a+}I^{1}_{b-} D{}^{c}D^{\alpha}_{a+}h(x)=-h(x)$$
and
$$I^{1}_{b-}D {}^{c}D^{\alpha}_{a+}h(x)=-{}^{c}D^{\alpha}_{a+}h(x)$$
are valid because function $h$ fulfills boundary conditions \eqref{c1},\eqref{c2}. Denote
$$ \gamma(x):= -I^{1}_{a+}I^{\alpha}_{b-}\gamma_{1}(x)-I^{1}_{a+}\gamma_{2}(x)+\gamma_{3}(x).$$
It is clear that $\gamma\in C[a,b]$ and $D^{1-\alpha}_{a+}\gamma \in L^{2}[a,b]$. Thus,
according to Lemma~\ref{lem:B}, there exist constants $c_{0}$ and $c_{1}$  such that
 $$ -I^{1}_{a+}I^{\alpha}_{b-}\gamma_{1}(x)-I^{1}_{a+}\gamma_{2}(x)+\gamma_{3}(x)=c_{0}+c_{1}x.$$
 Let us note that function $\gamma_{3}$ is
 $$ \gamma_{3}(x)=I^{1}_{a+}I^{\alpha}_{b-}\gamma_{1}(x)+I^{1}_{a+}\gamma_{2}(x)+c_{0}+c_{1}x.$$
Hence its first order derivative is continuous in $[a,b]$ and $\gamma_{3}\in C^{1}[a,b]$.\\
The proof of part (2) is similar. We write integral \eqref{eq:lem:C2}
as follows:
$$\int_{a}^{b}\left (\gamma_{1}(x)h(x)+\gamma_{2}(x){}^{c}D^{\alpha}_{a+}h(x)\right)dx
 = \int_{a}^{b}\left (-I^{1}_{a+}I^{\alpha}_{b-}\gamma_{1}(x)-I^{1}_{a+}\gamma_{2}(x)\right )D {}^{c}D^{\alpha}_{a+}h(x)dx=0. $$
The function in brackets is continuous in $[a,b]$  and
$$D^{1-\alpha}_{a+} \left (-I^{1}_{a+}I^{\alpha}_{b-}\gamma_{1}-I^{1}_{a+}\gamma_{2}\right )=-I^{\alpha}_{a+}I^{\alpha}_{b-}\gamma_{1}-I^{\alpha}_{a+}\gamma_{2}\in C[a,b] \subset L^{2}[a,b]$$
so we again can apply Lemma~\ref{lem:B} and obtain that there exist constants $c_{0}$
and $c_{1}$ such that
$$ I^{1}_{a+}I^{\alpha}_{b-}\gamma_{1}(x)+I^{1}_{a+}\gamma_{2}(x)=c_{0}+c_{1}x.$$
Thus functions $\gamma_{1,2}$
  fulfill equation:
$$ {}^{c}D^{\alpha}_{b-}\gamma_{2}(x)+\gamma_{1}(x)=0.$$
\end{proof}

The crucial idea in the proof of our main result is to apply direct variational methods to the fractional Sturm--Liouville equation. Starting from the fractional Sturm--Liouville equation the approach is to find an associated functional and to use this to find approximations to the stationary functions, which are necessarily solutions to the original equation. In the case of the fractional Sturm--Liouville equation an associated variational problem is the fractional isoperimetric problem which is defined in the following way:\\
\begin{equation}\label{izo:1}
\min J(y)=\int_a^b F(x,y(x),{}^{c}D^{\alpha}_{a+}y(x))\;dx,
\end{equation}
subject to the boundary conditions
\begin{equation} \label{izo:2}
y(a)=0,\quad y(b)=0
\end{equation}
and the isoperimetric constraint
\begin{equation}\label{izo:3}
I(y)=\int_a^b G(x,y(x),{}^{c}D^{\alpha}_{a+}y(x))\;dx=\xi,
\end{equation}
where $\xi\in\R$ is given, and
\begin{equation*}
\fonction{F}{[a,b]\times\R^2}{\R}{(x,y,u)}{F(x,y,u),}
\end{equation*}
\begin{equation*}
\fonction{G}{[a,b]\times\R^2}{\R}{(x,y,u)}{G(x,y,u)}
\end{equation*}
are functions of $C^1$ class, such that $\frac{\partial F}{\partial u}, \frac{\partial G}{\partial u}$ have continuous $D^{\alpha}_{b-}$ derivatives.

\begin{theorem}[cf. Theorem~3.3 \cite{Isoperimetric}]
\label{thm:EL}
If $y\in C[a,b]$ with $ {}^{c}D^{\alpha}_{a+}y\in C[a,b]$ is a stationary function for problem \eqref{izo:1}--\eqref{izo:3}, then there exists a real constant $\lambda$ such that, for $H=F+\lambda G$, the following equation:
\begin{equation}\label{eq:EL}
\frac{\partial H}{\partial y}(x,y(x),{}^{c}D^{\alpha}_{a+}y(x))+D^{\alpha}_{b-}\left[\frac{\partial H}{\partial u}(x,y(x),{}^{c}D^{\alpha}_{a+}y(x))\right]=0,
\end{equation}
holds, provided that
\begin{equation*}
\frac{\partial G}{\partial y}(x,y(x),{}^{c}D^{\alpha}_{a+}y(x))+D^{\alpha}_{b-}\left[\frac{\partial G}{\partial u}(x,y(x),{}^{c}D^{\alpha}_{a+}y(x))\right]\neq0.
\end{equation*}
\end{theorem}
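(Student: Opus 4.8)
The plan is to run the classical Lagrange--multiplier argument of the calculus of variations, adapted to the Caputo setting by means of the fractional integration by parts formula \eqref{eq:IBP}, which is exactly what converts ${}^{c}D^{\alpha}_{a+}$ acting on a variation into $D^{\alpha}_{b-}$ acting on a coefficient. Throughout, call $\eta$ an \emph{admissible variation} if $\eta\in C[a,b]$, ${}^{c}D^{\alpha}_{a+}\eta\in C[a,b]$ and $\eta(a)=\eta(b)=0$, so that $y+\epsilon\eta$ obeys \eqref{izo:2} and stays in the working class. First I would introduce the two-parameter deformation $\hat y = y + \epsilon_1\eta_1 + \epsilon_2\eta_2$ with $\eta_1,\eta_2$ admissible, and set $j(\epsilon_1,\epsilon_2):=J(\hat y)$, $i(\epsilon_1,\epsilon_2):=I(\hat y)$. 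Since $F,G\in C^1$, differentiation under the integral sign is justified; computing $\partial i/\partial\epsilon_2$ at the origin and moving ${}^{c}D^{\alpha}_{a+}$ off $\eta_2$ by the Caputo version of \eqref{eq:IBP} (the endpoint terms vanish because $\eta_2(a)=\eta_2(b)=0$, and the relevant boundary values are finite since $\partial G/\partial u$ has a continuous $D^{\alpha}_{b-}$ derivative) yields
\[
\left.\frac{\partial i}{\partial\epsilon_2}\right|_{(0,0)} = \int_a^b \eta_2(x)\left[\frac{\partial G}{\partial y} + D^{\alpha}_{b-}\frac{\partial G}{\partial u}\right]dx .
\]
By the standing hypothesis that $\frac{\partial G}{\partial y}+D^{\alpha}_{b-}\frac{\partial G}{\partial u}$ is not identically zero, together with a bump-function argument, I can fix $\eta_2$ so that this number is nonzero.

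Next I would invoke the implicit function theorem: since $i(0,0)=\xi$ and $\partial i/\partial\epsilon_2(0,0)\neq0$, there is a $C^1$ map $\epsilon_1\mapsto\epsilon_2(\epsilon_1)$ defined near $0$ with $\epsilon_2(0)=0$ and $i(\epsilon_1,\epsilon_2(\epsilon_1))=\xi$ for $\epsilon_1$ near $0$. Hence $\epsilon_1\mapsto j(\epsilon_1,\epsilon_2(\epsilon_1))$ represents $J$ along admissible competitors honouring the isoperimetric constraint, and its derivative vanishes at $\epsilon_1=0$ because $y$ is stationary. Differentiating $i(\epsilon_1,\epsilon_2(\epsilon_1))=\xi$ gives $\epsilon_2'(0)=-(\partial i/\partial\epsilon_1)/(\partial i/\partial\epsilon_2)$ at the origin; substituting this into the vanishing of $\frac{d}{d\epsilon_1}j(\epsilon_1,\epsilon_2(\epsilon_1))$ at $0$ and putting $\lambda:=-(\partial j/\partial\epsilon_2)/(\partial i/\partial\epsilon_2)$ at the origin, I obtain
\[
\left.\frac{\partial}{\partial\epsilon_1}(j+\lambda i)\right|_{(0,0)} = 0 ,
\]
i.e. $y$ is unconstrained-stationary for the augmented functional $\int_a^b H\,dx$ with $H=F+\lambda G$.

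It then remains to extract \eqref{eq:EL}. Expanding $\partial(j+\lambda i)/\partial\epsilon_1$ at the origin and once more using the Caputo version of \eqref{eq:IBP} to transfer ${}^{c}D^{\alpha}_{a+}$ off $\eta_1$ onto $\partial H/\partial u$ (again the endpoint contribution drops out since $\eta_1(a)=\eta_1(b)=0$ and $\partial H/\partial u$ has a continuous $D^{\alpha}_{b-}$ derivative), I reach
\[
\int_a^b \eta_1(x)\left[\frac{\partial H}{\partial y} + D^{\alpha}_{b-}\frac{\partial H}{\partial u}\right]dx = 0
\]
for every admissible $\eta_1$, the partial derivatives being evaluated along $(x,y(x),{}^{c}D^{\alpha}_{a+}y(x))$. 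Here the bracket is continuous on $[a,b]$, so the fundamental lemma of the calculus of variations (du~Bois-Reymond form, tested against smooth functions compactly supported in $(a,b)$, which are automatically admissible variations --- the same circle of ideas that underlies Lemma~\ref{lem:C}) forces the bracket to vanish identically, which is precisely \eqref{eq:EL}.

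The main obstacle is not the strategy but the analytic bookkeeping required to license each step in the stated regularity class: that the integrands depend differentiably on $(\epsilon_1,\epsilon_2)$ so one may differentiate under the integral sign; that $\partial G/\partial u$ and $\partial H/\partial u$ evaluated along the deformed curves lie in a space to which the fractional integration by parts applies and for which the boundary terms --- carrying the vanishing factors $\eta_i(a)=\eta_i(b)=0$ --- are genuinely well defined; and that the test-function class in the concluding fundamental-lemma step is rich enough. All of this rests on the composition and boundedness properties collected in Section~\ref{sec:prel} together with the hypotheses on $F$ and $G$; once these are secured, the two-parameter Lagrange computation closes the argument.
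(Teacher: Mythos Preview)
The paper does not supply its own proof of this theorem: it is stated with the tag ``cf.~Theorem~3.3 \cite{Isoperimetric}'' and no proof environment follows, the text passing directly to Section~\ref{sec:mr}. So there is nothing to compare against in the paper itself; the result is imported from Almeida--Torres.

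Your argument is the standard two-parameter Lagrange-multiplier derivation, and it is essentially what the cited reference does. The strategy is sound and the steps close: fix $\eta_2$ so that the constraint variation is nonzero, solve for $\epsilon_2$ via the implicit function theorem, read off $\lambda$, then integrate by parts and invoke the fundamental lemma. One small remark on bookkeeping: the formula labelled \eqref{eq:IBP} in the paper transfers a \emph{Riemann--Liouville} left derivative to a \emph{Caputo} right derivative, whereas you need the companion identity transferring a \emph{Caputo} left derivative to a \emph{Riemann--Liouville} right derivative. You flag this (``the Caputo version of \eqref{eq:IBP}''), and that companion formula is equally standard; just be explicit that it is not literally \eqref{eq:IBP} but its dual, with boundary term $g(x)\,I^{1-\alpha}_{b-}f(x)\big|_a^b$, which vanishes because $g=\eta_i$ satisfies $\eta_i(a)=\eta_i(b)=0$. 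With that clarification the proof is correct.
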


%-------------------------------------------------

\section{Main results}\label{sec:mr}

For the first works on the fractional calculus of variations, we should look back to 1996, when Riewe used non--integer order derivatives in order to better describe nonconservative systems in mechanics \cite{CD:Riewe:1996}. Since then, many papers have been written on the subject,
%-- see, e.g.,~\cite{Shakoor:01,Isoperimetric,Blaszczyk:et:al,BCGI,LTDExist,Cresson,Gastao,Malgorzata1,Lazo,Agnieszka,Dorota,Tatiana,Rabei} and references therein.
for a state of the art on the fractional calculus of variations  we refer the reader to the recent book \cite{book:AD}. In this section, we will present an interesting application of the fractional variational calculus. Namely, using the fact that the fractional Sturm--Liouville eigenvalue problem can be remodeled as the fractional isoperimetric variational problem, we will show that there exist an increasing sequence of eigenvalues and a corresponding sequence of eigenfunctions for which the fractional Sturm--Liouville equation is satisfied. Moreover, we will prove that the lowest eigenvalue is the minimum value for a certain variational functional.

\subsection{Existence of Discrete Spectrum for Fractional Sturm--Liouville Problem}

We will show that similar to the classical case for the fractional Sturm--Liouville problem  there exist an infinite monotonic increasing sequence of eigenvalues. Moreover, that apart from multiplicative factors to each eigenvalue there corresponds precisely one eigenfunction.\\
We shall use the following assumptions.\\
(H1) Let $\frac{1}{2}<\alpha<1$ and $p,q,w_{\alpha}$ be given functions such that: $p$ is of $C^1$ class and $p(x)>0$; $q,w_{\alpha}$ are continuous, $w_{\alpha}(x)>0$ and $(\sqrt{w_{\alpha}})'$ is H\"{o}lderian of order $\beta\leq \alpha - \frac{1}{2}$.\\
Consider the fractional differential equation
\begin{equation}\label{eq:SLE}
\left[{}^{c}D^{\alpha}_{b-}p(x){}^{c}D^{\alpha}_{a+}+q(x)\right]y(x)= \lambda w_{\alpha}(x)y(x),
\end{equation}
that will be called the fractional Sturm--Liouville equation, subject to the boundary conditions
\begin{equation}\label{eq:BC}
y(a)=y(b)=0.
\end{equation}

\begin{theorem}\label{thm:exist}
Under assumptions (H1), the fractional Sturm--Liouville Problem (FSLP) \eqref{eq:SLE}--\eqref{eq:BC}
has an infinite increasing sequence of eigenvalues $\lambda^{(1)}, \lambda^{(2)},...,$
and to each eigenvalue $\lambda^{(n)}$ there corresponds an eigenfunction
$y^{(n)}$ which is unique up to a constant factor. Furthermore, eigenfunctions $y^{(n)}$ form an orthogonal set of solutions.
\end{theorem}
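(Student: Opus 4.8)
The plan is to mimic the classical Sturm--Liouville argument via the Rayleigh quotient, replacing the classical setting with its fractional counterpart and using the fractional isoperimetric Euler--Lagrange theorem (Theorem~\ref{thm:EL}) as the link between constrained minimizers and solutions of \eqref{eq:SLE}--\eqref{eq:BC}. Concretely, set $F(x,y,u)=p(x)u^2+q(x)y^2$ and $G(x,y,u)=w_\alpha(x)y^2$, so that the isoperimetric problem \eqref{izo:1}--\eqref{izo:3} reads: minimize $J(y)=\int_a^b\bigl(p(x)({}^{c}D^{\alpha}_{a+}y)^2+q(x)y^2\bigr)dx$ subject to $I(y)=\int_a^b w_\alpha(x)y^2\,dx=1$ and $y(a)=y(b)=0$. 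By Theorem~\ref{thm:EL}, with $H=F+\lambda G$, a stationary function satisfies $2q(x)y(x)+2D^{\alpha}_{b-}[p(x)\,{}^{c}D^{\alpha}_{a+}y(x)]=-2\lambda w_\alpha(x)y(x)$ (after noting $D^{\alpha}_{b-}={}^{c}D^{\alpha}_{b-}$ on the relevant functions since they vanish at $b$), i.e. exactly \eqref{eq:SLE} with eigenvalue $-\lambda$; so each constrained minimizer over a suitable admissible class is an eigenfunction, and the Lagrange multiplier is (up to sign) the eigenvalue, equal to the minimal value $J(y)$.

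The first substantive step is to carry out the direct method: introduce the admissible set $\mathcal{A}=\{y\in C[a,b]:\ {}^{c}D^{\alpha}_{a+}y\in L^2[a,b],\ y(a)=y(b)=0,\ I(y)=1\}$, show $d_1:=\inf_{\mathcal{A}}J$ is finite (bounded below, using $p>0$, continuity of $q$, and the isoperimetric normalization together with the $L^p$-boundedness \eqref{K} of $I^{\alpha}_{a+}$ to control $\|y\|$ by $\|{}^{c}D^{\alpha}_{a+}y\|$ via $y=I^{\alpha}_{a+}\,{}^{c}D^{\alpha}_{a+}y$, valid since $y(a)=0$), and extract a minimizing sequence $y_m$. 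Coercivity of $J$ in the norm $\|{}^{c}D^{\alpha}_{a+}y\|_{L^2}$ gives a weakly convergent subsequence in $L^2$ for ${}^{c}D^{\alpha}_{a+}y_m$; the representation $y_m=I^{\alpha}_{a+}({}^{c}D^{\alpha}_{a+}y_m)$ plus compactness of $I^{\alpha}_{a+}:L^2\to C[a,b]$ (a fractional Arzel\`a--Ascoli / Kolmogorov--Riesz argument, the content of the Appendix lemmas) yields strong convergence $y_m\to y^{(1)}$ in $C[a,b]$, hence $I(y^{(1)})=1$ so the constraint is preserved, and weak lower semicontinuity of the convex functional $\int p\,u^2$ gives $J(y^{(1)})\le d_1$, so $y^{(1)}$ is a minimizer. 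Then Theorem~\ref{thm:EL}, with the nondegeneracy condition $\tfrac{\partial G}{\partial y}+D^{\alpha}_{b-}\tfrac{\partial G}{\partial u}=2w_\alpha y\ne 0$ (true since $w_\alpha>0$ and $y^{(1)}\not\equiv0$), produces $\lambda^{(1)}=d_1$ and the equation \eqref{eq:SLE}.

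The remaining eigenvalues are obtained by the standard induction: having found orthogonal $y^{(1)},\dots,y^{(n-1)}$ (orthogonality meaning $\int_a^b w_\alpha y^{(i)}y^{(j)}=0$), minimize $J$ over $\mathcal{A}$ intersected with the additional linear constraints $\int_a^b w_\alpha(x)y(x)y^{(k)}(x)\,dx=0$, $k=1,\dots,n-1$; the same compactness argument gives a minimizer $y^{(n)}$ with value $\lambda^{(n)}=d_n\ge d_{n-1}$, and the Euler--Lagrange analysis (now with $n-1$ extra multipliers $\mu_k$) shows those multipliers vanish because testing the equation against $y^{(k)}$ and using that $y^{(k)}$ already solves \eqref{eq:SLE} with its own eigenvalue forces $\mu_k(\lambda^{(k)}-\lambda^{(n)})\|y^{(k)}\|_{w_\alpha}^2=0$ — here the subtlety is the case of repeated eigenvalues, handled by a Gram--Schmidt choice within the eigenspace. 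That $\lambda^{(n)}\to\infty$ follows because a bounded sequence of eigenvalues would, by the compactness of $I^{\alpha}_{a+}$, force the orthonormal (in $L^2_{w_\alpha}$) eigenfunctions to have a convergent subsequence, contradicting $\|y^{(i)}-y^{(j)}\|_{w_\alpha}^2=2$. Uniqueness up to a constant of the eigenfunction for each $\lambda^{(n)}$ is where Lemma~\ref{lem:A}, Lemma~\ref{lem:B} and Lemma~\ref{lem:C} enter: if two independent eigenfunctions shared an eigenvalue one builds, as in the classical Wronskian argument transcribed through the fractional integration-by-parts formula \eqref{eq:IBP}, a contradiction with the boundary conditions \eqref{eq:BC}; the fractional nonlocality makes this the most delicate point, and it is precisely why the technical hypotheses in (H1) on the H\"older regularity of $(\sqrt{w_\alpha})'$ are needed.

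\textbf{Main obstacle.} I expect the hardest part to be the compactness step — establishing that a $J$-bounded (equivalently ${}^{c}D^{\alpha}_{a+}$-$L^2$-bounded) sequence of admissible functions has a subsequence converging strongly enough (in $C[a,b]$, to preserve the isoperimetric constraint and orthogonality constraints in the limit) — together with the rigorous justification that the Lagrange multipliers attached to the orthogonality constraints vanish in the presence of possibly degenerate eigenvalues; these are the two places where the nonlocal character of the Caputo derivative genuinely complicates the classical proof and where the Appendix lemmas and Lemmas~\ref{lem:A}--\ref{lem:C} must do their work.
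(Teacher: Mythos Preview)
Your overall variational strategy matches the paper's in spirit, but there is a genuine gap and a misreading of the role of the technical lemmas.

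The gap is the regularity step. Your direct method produces a minimizer $y^{(1)}\in C[a,b]$ with at best ${}^{c}D^{\alpha}_{a+}y^{(1)}\in L^{2}$ (from weak convergence), but Theorem~\ref{thm:EL} as stated requires ${}^{c}D^{\alpha}_{a+}y\in C[a,b]$ and, implicitly, that $D^{\alpha}_{b-}\bigl(p\,{}^{c}D^{\alpha}_{a+}y\bigr)$ makes sense; you cannot simply invoke it on $y^{(1)}$. The paper does \emph{not} apply Theorem~\ref{thm:EL} to the limit minimizer. Instead it uses the Ritz method: approximate by finite sums $y_m=\tfrac{1}{\sqrt{w_\alpha}}\sum_{k=1}^{m}\beta_k\sin(kx)$, apply the finite-dimensional Lagrange multiplier rule to obtain an integral identity for each $m$, and then pass to the limit. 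The Appendix lemmas (Lemma~\ref{lem:D} and Lemma~\ref{lem:E}) are used for this limit passage --- they are convergence statements for Fourier partial sums under fractional operators, not a compactness result for $I^{\alpha}_{a+}$. The limiting relation is a weak Euler--Lagrange identity of the form
\[
\int_{0}^{\pi}\bigl(\gamma_{1}h+\gamma_{2}\,{}^{c}D^{\alpha}_{0+}h+\gamma_{3}\,D\,{}^{c}D^{\alpha}_{0+}h\bigr)\,dx=0,
\]
and it is precisely Lemma~\ref{lem:C} that turns this into the strong equation \eqref{eq:SLE}: part~(1) bootstraps $\gamma_{3}=-p\,I^{1-\alpha}_{0+}y^{(1)}\in C^{1}$, hence ${}^{c}D^{\alpha}_{0+}y^{(1)}\in C$, and part~(2) then yields ${}^{c}D^{\alpha}_{\pi-}\bar{\gamma}_{2}+\bar{\gamma}_{1}=0$, which is \eqref{eq:SLE}. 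In other words, Lemmas~\ref{lem:A}--\ref{lem:C} play the role of fractional du~Bois--Reymond/fundamental lemmas plus a regularity bootstrap; they are \emph{not} a Wronskian device for simplicity of eigenvalues as you suggest. Likewise, the H\"older hypothesis on $(\sqrt{w_\alpha})'$ in (H1) is consumed by Lemma~\ref{lem:E} (to control the Ritz test functions $h_m=g_m/\sqrt{w_\alpha}$), not by any uniqueness argument.

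Your abstract direct-method route could in principle be salvaged, but at the point where you write ``Then Theorem~\ref{thm:EL}\ldots produces $\lambda^{(1)}$'' you would still need exactly Lemma~\ref{lem:C} (or an equivalent fractional fundamental-lemma-with-regularity argument) to pass from the first-variation identity to the pointwise equation; without that, the proof stalls at a weak relation and never reaches \eqref{eq:SLE}.
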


\begin{proof}
The proof is similar in spirit to \cite{book:GF} and will be divided into 6 steps. As in \cite{book:GF} at the same time we shall derive a method for approximating the eigenvalues and eigenfunctions.\\
\textit{Step 1.} We shall consider problem of minimizing the functional
\begin{equation}\label{eq:IF}
J(y)= \int_{a}^{b}\left[p(x) ({}^{c}D^{\alpha}_{a+}y)^{2}+q(x)y^{2}\right]dx
\end{equation}
subject to an isoperimetric constraint
\begin{equation}\label{eq:IC}
\hspace{2cm}I(y)= \int_{a}^{b}w_{\alpha}(x)y^{2}dx=1
\end{equation}
and boundary conditions \eqref{eq:BC}. First, let us point out that functional \eqref{eq:IF} is bounded from below. Indeed, as $p(x)>0$ we have
\begin{multline*}
J(y)= \int_{a}^{b}\left[p(x) ({}^{c}D^{\alpha}_{a+}y)^{2}+q(x)y^{2}\right]\;dx\\
\geq\min\limits_{x\in[a,b]}\frac{q(x)}{w_{\alpha}(x)}\cdot\int_{a}^{b}w_{\alpha}(x)y^{2}\;dx
=\min\limits_{x\in[a,b]}\frac{q(x)}{w_{\alpha}(x)}=:M_0>-\infty.
\end{multline*}
From now on, for simplicity, we assume that $a=0$ and $b=\pi$. According to the Ritz method, we approximate solution of \eqref{eq:BC}--\eqref{eq:IC}  using the following trigonometric function with coefficient
depending on $w_{\alpha}$:
\begin{equation}\label{aprox}
y_{m}(x)= \frac{1}{\sqrt{w_{\alpha}}}\sum_{k=1}^{m} \beta_{k}\sin(kx).
\end{equation}
Observe that $y_{m}(0)=y_{m}(\pi)=0$. Substituting \eqref{aprox} into \eqref{eq:IF} and \eqref{eq:IC} we obtain the problem of minimizing the function
\begin{equation}\label{eq:F}
\tilde{J}(\beta_{1},...,\beta_{m})= \tilde{J}([\beta])=
\end{equation}
$$ = \sum_{k,j=1}^{m}\beta_{k}\beta_{j}\int_{0}^{\pi}\left[p(x) \left({}^{c}D^{\alpha}_{0+}\frac{\sin (kx)}{\sqrt{w_{\alpha}}}\cdot {}^{c}D^{\alpha}_{0+}\frac{\sin (jx)}{\sqrt{w_{\alpha}}}\right)+
\frac{q(x)}{w_{\alpha}(x)}\sin(kx)\sin(jx)\right]dx$$
subject to the condition
\begin{equation}\label{eq:sphere}
\tilde{I}(\beta_{1},...,\beta_{m})= \tilde{I}([\beta])=\frac{\pi}{2}\sum_{k=1}^{m}(\beta_{k})^{2}=1.
\end{equation}
Since $\tilde{J}([\beta])$ is continuous and the set given by \eqref{eq:sphere} is compact, function $\tilde{J}([\beta])$ attains minimum, denoted by $\lambda_m^{(1)}$, at some point $[\beta^{(1)}]=(\beta_{1}^{(1)},...,\beta_{m}^{(1)})$. If this procedure is carried out for $m=1,2,\ldots$, we obtain a sequence of numbers $\lambda_{1}^{(1)},\lambda_2^{(1)},\ldots$. Because $\lambda_{m+1}^{(1)}\leq \lambda_m^{(1)}$ and  $J(y)$ is bounded from below, we can find the limit
\begin{equation*}
\lim\limits_{m\rightarrow\infty}\lambda_m^{(1)}=\lambda^{(1)}.
\end{equation*}

\textit{Step 2.} Let
\begin{equation*}
y_{m}^{(1)}(x)= \frac{1}{\sqrt{w_{\alpha}}}\sum_{k=1}^{m} \beta_{k}^{(1)}\sin(kx)
\end{equation*}
denote the linear combination \eqref{aprox} achieving the minimum $\lambda_m^{(1)}$. We shall prove that sequence $(y_{m}^{(1)})_{m\in\N}$ contains a uniformly convergent subsequence. From now on, for simplicity, we will write $y_m$ instead of $y_m^{(1)}$. Recall that
\begin{equation*}
\lambda_m^{(1)}=\int_{0}^{\pi} \left[p(x)\left(\Dl\seq\right)^2+q(x)\seq^2\right]dx
\end{equation*}
is convergent, so it must be bounded, i.e.,~ there exists constant $M>0$ such that
\begin{equation*}
\int_{0}^{\pi} \left[p(x)\left(\Dl\seq\right)^2+q(x)\seq^2\right]dx\leq M,~m\in\N.
\end{equation*}
Therefore, for all $m\in\N$ it holds the following:
\begin{multline*}
\int_0^{\pi} p(x)\left(\Dl\seq\right)^2\;dx\leq M+\left|\int_0^{\pi} q(x)\seq^2dx\right|\\
\leq M+\max\limits_{x\in[0,\pi]}\left|\frac{q(x)}{w_{\alpha}(x)}\right|\int_0^{\pi} w_{\alpha}(x)\seq^2\;dx=M+\max\limits_{x\in[0,\pi]}\left|\frac{q(x)}{w_{\alpha}(x)}\right|=:M_1.
\end{multline*}
Moreover, since $p(x)>0$ we have
\begin{equation*}
\min\limits_{x\in[0,\pi]}p(x)\int_0^{\pi} \left(\Dl\seq\right)^2\;dx\leq \int_0^{\pi} p(x)\left(\Dl\seq\right)^2\;dx\leq M_1,
\end{equation*}
and hence
\begin{equation}\label{eq:4}
\int_0^{\pi} \left(\Dl\seq\right)^2\;dx\leq\frac{M_1}{\min\limits_{x\in[0,\pi]}p(x)}=: M_2.
\end{equation}
Using \eqref{eq:3}, \eqref{eq:4}, condition $\seq(0)=0$ and Schwartz inequality, we obtain the following:
\begin{multline*}
\left|\seq(x)\right|^2=\left|\Il \Dl\seq(x)\right|^2=\frac{1}{\Gamma(\alpha)}\left|\int_0^{x}(x-t)^{\alpha-1}\;\Dl\seq(t)dt\right|^2\\
\leq \frac{1}{\Gamma(\alpha)}\left(\int_0^\pi \left|\Dl\seq(t)\right|^2dt\right)\left(\int_0^x (x-t)^{2(\alpha-1)}dt\right)\\
\leq \frac{1}{\Gamma(\alpha)}M_2\int_0^x (x-t)^{2(\alpha-1)}dt<\frac{1}{\Gamma(\alpha)}M_2\frac{1}{2\alpha-1}\pi^{2\alpha-1},
\end{multline*}
so that $(\seq)_{m\in\N}$ is uniformly bounded.
Now, using Schwartz inequality, equation \eqref{eq:4} and the fact that the following inequality:
\begin{equation*}%\label{eq:5}
\forall t_1\geq t_2\geq 0,~(t_1-t_2)^2\leq t_1^2-t_2^2
\end{equation*}
holds, we have for any $0< x_1< x_2\leq \pi $ that
\begin{multline*}
\left|\seq(x_2)-\seq(x_1)\right|=\left|\Il \Dl\seq(x_2)-\Il \Dl\seq(x_1)\right|\\
=\frac{1}{\Gamma(\alpha)}\left|\int_0^{x_2}(x_2-t)^{\alpha-1}\Dl\seq(t)dt-\int_0^{x_1}(x_1-t)^{\alpha-1}\Dl\seq(t)dt\right|\\
=\frac{1}{\Gamma(\alpha)}\left|\int_{x_1}^{x_2}(x_2-t)^{\alpha-1}\Dl\seq(t)dt-\int_0^{x_1}\left((x_2-t)^{\alpha-1}-(x_1-t)^{\alpha-1}\right)\Dl\seq(t)dt\right|\\
\leq \frac{1}{\Gamma(\alpha)}\left[\left(\int_{x_1}^{x_2}(x_2-t)^{2(\alpha-1)}dt\right)^{\frac{1}{2}}\left(\int_{x_1}^{x_2}\left[\left(\Dl\seq(t)\right)^2\right]dt\right)^{\frac{1}{2}}\right.\\
\left.+\left(\int_0^{x_1}\left((x_1-t)^{\alpha-1}-(x_2-t)^{\alpha-1}\right)^2dt\right)^{\frac{1}{2}}\left(\int_{0}^{x_1}\left[\left(\Dl\seq(t)\right)^2\right]dt\right)^{\frac{1}{2}}\right]\\
\leq\frac{\sqrt{M_2}}{\Gamma(\alpha)}\left[\left(\int_{x_1}^{x_2}(x_2-t)^{2(\alpha-1)}dt\right)^{\frac{1}{2}}+\left(\int_0^{x_1}\left((x_1-t)^{2(\alpha-1)}-(x_2-t)^{2(\alpha-1)}\right)dt\right)^{\frac{1}{2}}\right]\\
=\frac{\sqrt{M_2}}{\Gamma(\alpha)\sqrt{2\alpha-1}}\left[(x_2-x_1)^{\alpha-\frac{1}{2}}+\left[(x_2-x_1)^{2\alpha-1}-x_2^{2\alpha-1}+x_1^{2\alpha-1}\right]^{\frac{1}{2}}\right]\\
\leq \frac{2\sqrt{M_2}}{\Gamma(\alpha)\sqrt{2\alpha-1}}(x_2-x_1)^{\alpha-\frac{1}{2}}.
\end{multline*}
Therefore, by Ascoli's theorem, there exists a uniformly convergent subsequence $(y_{m_n})_{n\in\N}$ of sequence $(\seq)_{m\in\N}$. It means that we can find $y^{(1)}\in C[0,\pi]$ such that
\begin{equation*}
y^{(1)}=\lim\limits_{n\rightarrow\infty}y_{m_n}.
\end{equation*}

\textit{Step 3.} Observe that by the Lagrange multiplier rule at $[\beta]=[\beta^{(1)}]$ we have
\begin{equation*}
0=\frac{\partial}{\partial \beta_{j}}\left [\tilde{J}([\beta])-\lambda^{(1)}_{m}\tilde{I}([\beta])\right]|_{[\beta]=[\beta^{(1)}]},~~j=1,\dots,m.
\end{equation*}
Multiplying each of equations by an arbitrary constant $C^{j}$ and summing from 1 to $m$ we obtain
\begin{equation}\label{lag:1}
0=\sum_{j=1}^{m}C^{j}\frac{\partial}{\partial \beta_{j}}\left [\tilde{J}([\beta])-\lambda^{(1)}_{m}\tilde{I}([\beta])\right]|_{[\beta]=[\beta^{(1)}]}.
\end{equation}
Introducing
\begin{equation*}
h_{m}(x)=\frac{1}{\sqrt{w_{\alpha}}}\sum_{j=1}^{m} C^{j}\sin (jx)
\end{equation*}
we can rewrite \eqref{lag:1} in the form
\begin{equation}\label{lag:2}
0=\int_{0}^{\pi}\left [p(x){}^{c}D^{\alpha}_{0+}y_{m}{}^{c}D^{\alpha}_{0+}h_{m}+[q(x)-\lambda^{(1)}_{m}w_{\alpha}(x)]y_{m}h_{m}\right]dx.
\end{equation}
Using the differentiation properties and formula  ${}^{c}D^{\alpha}_{0+}y_{m}=D^{\alpha}_{0+}y_{m}=DI^{1-\alpha}_{0+}y_{m}$ we write \eqref{lag:2} as
\begin{equation}\label{intseq}
0=\int_{0}^{\pi}\left [-p'(x)I^{1-\alpha}_{0+}y_{m}{}^{c}D^{\alpha}_{0+}h_{m}
-p(x)I^{1-\alpha}_{0+}y_{m}D{}^{c}D^{\alpha}_{0+}h_{m}\right]dx +
\end{equation}
$$ + p(x)I^{1-\alpha}_{0+}y_{m}{}^{c}D^{\alpha}_{0+}h_{m}|_{x=0}^{x=\pi}+\int_{0}^{\pi}[q(x)-\lambda^{(1)}_{m}w_{\alpha}(x)]y_{m}h_{m}dx:=I_{m}.$$
By Lemma~\ref{lem:D} (with $w=1/\sqrt{w_{\alpha}}$) and Lemma~\ref{lem:E} (Appendix), for function $h$ fulfilling assumptions of
Lemma~\ref{lem:B}, we shall obtain in the limit (at least for the convergent subsequence $(y_{m_n})_{n\in\N}$) the relation
\begin{equation}
0=\int_{0}^{\pi}\left [-p'(x)I^{1-\alpha}_{0+}y^{(1)}\;{}^{c}D^{\alpha}_{0+}h
-p(x)I^{1-\alpha}_{0+}y^{(1)}D{}^{c}D^{\alpha}_{0+}h\right]dx +
\label{int}
\end{equation}
$$ + \int_{0}^{\pi}[q(x)-\lambda^{(1)}w_{\alpha}(x)]y^{(1)}hdx:=I.$$
Let us check the convergence of integrals \eqref{intseq} explicitly
\begin{equation}\label{estproof}
 \left |I_{m}-I\right|
\end{equation}
$$ \leq \int_{0}^{\pi}|-p'(x)I^{1-\alpha}_{0+}y_{m}{}^{c}D^{\alpha}_{0+}h_{m}+p'(x)I^{1-\alpha}_{0+}y^{(1)}\;{}^{c}D^{\alpha}_{0+}h|dx+$$
$$ + \int_{0}^{\pi}|p(x)I^{1-\alpha}_{0+}y_{m}D{}^{c}D^{\alpha}_{0+}h_{m}-p(x)I^{1-\alpha}_{0+}y^{(1)}\;D{}^{c}D^{\alpha}_{0+}h|dx+$$
$$ +\left|p(x)I^{1-\alpha}_{0+}y_{m}{}^{c}D^{\alpha}_{0+}h_{m}|_{x=0}-p(x)I^{1-\alpha}_{0+}y^{(1)}\;{}^{c}D^{\alpha}_{0+}h|_{x=0}\right|+$$
$$+\left|p(x)I^{1-\alpha}_{0+}y_{m}{}^{c}D^{\alpha}_{0+}h_{m}|_{x=\pi}-p(x)I^{1-\alpha}_{0+}y^{(1)}\;{}^{c}D^{\alpha}_{0+}h|_{x=\pi}\right|+$$
$$ + \int_{0}^{\pi}|[q(x)-\lambda^{(1)}_{m}w_{\alpha}(x)]y_{m}h_{m}-[q(x)-\lambda^{(1)}w_{\alpha}(x)]y^{(1)}\; h|dx.$$

For the first integral we get
$$\int_{0}^{\pi}|-p'(x)I^{1-\alpha}_{0+}y_{m}{}^{c}D^{\alpha}_{0+}h_{m}+p'(x)I^{1-\alpha}_{0+}y^{(1)}\;{}^{c}D^{\alpha}_{0+}h|dx\leq $$
$$ \leq ||p'||\cdot\left [ ||{}^{c}D^{\alpha}_{0,+}h||\cdot ||I^{1-\alpha}_{0,+}(y_{m}-y^{(1)})||_{L^{1}}+M_3K_{1-\alpha}\sqrt{\pi}||{}^{c}D^{\alpha}_{0+}(h_{m}-h)||_{L^{2}}\right],$$
where constant $M_3=\sup\limits_{m\in \mathbb{N}} ||y_{m}||$ and $||\cdot ||$ denotes the supremum norm in the $C[0,\pi]$ space.
Now, we estimate the second integral
$$\int_{0}^{\pi}|p(x)I^{1-\alpha}_{0+}y_{m}D{}^{c}D^{\alpha}_{0+}h_{m}-p(x)I^{1-\alpha}_{0+}y^{(1)}\;D{}^{c}D^{\alpha}_{0+}h|dx\leq $$
$$ \leq ||p||\cdot\left [||D{}^{c}D^{\alpha}_{0+}h||_{L^{2}}\cdot ||I^{1-\alpha}_{0+}(y_{m}-y^{(1)})||_{L^{2}}+M_3K_{1-\alpha}\cdot ||D{}^{c}D^{\alpha}_{0+}(h_{m}-h)||_{L^{1}}\right].$$
For the next two terms we have
\begin{equation}\label{point1}
 I^{1-\alpha}_{0,+}y_{m}(0)\longrightarrow I^{1-\alpha}_{0,+}y(0),\quad  I^{1-\alpha}_{0,+}y_{m}(\pi)\longrightarrow I^{1-\alpha}_{0,+}y(\pi)
\end{equation}
resulting from the convergence of sequence $ ||y_{m}-y||\longrightarrow 0$. For sequence $h_{m}=g_{m}/\sqrt{w_{\alpha}}$, we infer from
Lemma~\ref{lem:E} that
 $$ \lim_{m\longrightarrow\infty}||h'_{m}-h'||=0.$$
Hence, also
 $$ \lim_{m\longrightarrow\infty}||{}^{c}D^{\alpha}_{0,+}(h_{m}-h)||= \lim_{m\longrightarrow\infty}||I^{1-\alpha}_{0,+}(h'_{m}-h')||=0$$
and at points $x=0,\pi$ we obtain
\begin{equation}\label{point2}
  {}^{c}D^{\alpha}_{0,+}h_{m}(0)\longrightarrow {}^{c}D^{\alpha}_{0,+}h(0),\quad  {}^{c}D^{\alpha}_{0,+}h_{m}(\pi)\longrightarrow {}^{c}D^{\alpha}_{0,+}h(\pi).
\end{equation}
The above pointwise convergences \eqref{point1} and \eqref{point2} imply that
$$ \lim_{m\longrightarrow\infty} \left|p(x)I^{1-\alpha}_{0+}y_{m}{}^{c}D^{\alpha}_{0+}h_{m}|_{x=0}-p(x)I^{1-\alpha}_{0+}y^{(1)}\;{}^{c}D^{\alpha}_{0+}h|_{x=0}\right|=0$$
$$  \lim_{m\longrightarrow\infty}\left|p(x)I^{1-\alpha}_{0+}y_{m}{}^{c}D^{\alpha}_{0+}h_{m}|_{x=\pi}-p(x)I^{1-\alpha}_{0+}y^{(1)}\;{}^{c}D^{\alpha}_{0+}h|_{x=\pi}\right|=0. $$
Finally, for the last term in estimation \eqref{estproof} we get
$$ \int_{0}^{\pi}|[q(x)-\lambda^{(1)}_{m}w_{\alpha}(x)]y_{m}h_{m}-[q(x)-\lambda^{(1)}w_{\alpha}(x)]y^{(1)}\; h|dx\leq$$
$$ \leq \int_{0}^{\pi}|q(x)(y_{m}h_{m}-y^{(1)}\; h)|dx
 +\int_{0}^{\pi}|w_{\alpha}(x)(\lambda^{(1)}_{m}y_{m}h_{m}-\lambda^{(1)}y^{(1)}\; h)|dx \leq $$
$$ \leq \pi\cdot||q||\cdot \left [M_3 \cdot ||h_{m}-h|| + ||h||\cdot ||y_{m}-y^{(1)}||\right] +$$
$$ +\pi\cdot||w_{\alpha}||\cdot\left  [\Lambda\left (M_3 \cdot ||h_{m}-h|| + ||h||\cdot ||y_{m}-y^{(1)}||\right) + ||y^{(1)}h||\cdot |\lambda^{(1)}_{m}-\lambda^{(1)}|\right],$$
where constants $M_3=\sup\limits_{m\in \mathbb{N}} ||y_{m}||$ and $\Lambda=\sup\limits_{m\in \mathbb{N}} |\lambda^{(1)}_{m}| $. We conclude
that
$$ 0= \lim_{m\longrightarrow \infty}I_{m}=I$$
and (\ref{int}) is fulfilled for function $y^{(1)}$ being the limit of subsequence $(y_{m_n})$ of the sequence $\left(y_{m}\right)_{m\in\N}$.

\textit{Step 4.}
Let us denote in relation \eqref{int}:
\begin{eqnarray*}
&& \gamma_1 (x):=(q(x)-\lambda^{(1)}w_{\alpha})y^{(1)}(x),\\
&& \gamma_2 (x):= -p'(x)I^{1-\alpha}_{0+} y^{(1)}(x),\\
&& \gamma_3 (x):= -p(x)I^{1-\alpha}_{0+}y^{(1)}(x).
\end{eqnarray*}
We observe that $\gamma_{j} \in C[0,\pi],
\;j=1,2,3 $ and $D^{1-\alpha}_{0+}\gamma_{3}\in L^{2}[0,\pi]$ because
$$ D^{1-\alpha}_{0+}\gamma_{3}=D^{1-\alpha}_{0+}\left (p \cdot I^{1-\alpha}_{0+}y^{(1)}\right) = I^{\alpha}_{0+} D\left (p \cdot I^{1-\alpha}_{0+}y^{(1)}\right)=$$
$$  = I^{\alpha}_{0+} \left (p' \cdot I^{1-\alpha}_{0+}y^{(1)} + p\cdot
\; {}^{c}D^{\alpha}_{0+}y^{(1)}\right). $$
Both parts of the above function belong to the $L^{2}[0,\pi]$ space.\\
Assuming that function $h$ in \eqref{int} is an arbitrary function fulfilling assumptions of Lemma~\ref{lem:C} and  applying Lemma~\ref{lem:C} part (1),  we conclude that
$\gamma_3 =-p\cdot I^{1-\alpha}_{0+}y^{(1)}\in C^{1}[0,\pi]$ . From this fact it follows that $p\cdot D I^{1-\alpha}_{0+}y^{(1)}\in C[0,\pi]$ and
  integral \eqref{int} can be rewritten as
\begin{equation*}
0=\int_{0}^{\pi}\left [p(x\; {}^{c}D^{\alpha}_{0+}y^{(1)}{}^{c}D^{\alpha}_{0+}h
+(q(x)-\lambda^{(1)}w_{\alpha}(x))y^{(1)}h\right]dx.
\end{equation*}
Now we apply Lemma~\ref{lem:C} part (2) defining
\begin{eqnarray*}
&&\bar{\gamma}_{1} (x) : =  \gamma_1 (x)=(q(x)-\lambda^{(1)}w_{\alpha})y^{(1)}(x),\\
&&\bar{\gamma}_{2} (x):= p(x)DI^{1-\alpha}_{0+}y^{(1)}(x).
\end{eqnarray*}
This time $\bar{\gamma}_{1,2}\in C[0,\pi]$ and from Lemma~\ref{lem:C} part (2)
it follows that
\begin{equation*}
\left[{}^{c}D^{\alpha}_{\pi-}p(x){}^{c}D^{\alpha}_{0+}+q(x)\right]y^{(1)}(x)= \lambda^{(1)}w_{\alpha}(x)y^{(1)}(x).
\end{equation*}
By construction this solution fulfills the Dirichlet boundary conditions
\begin{equation*}
y^{(1)}(0)=y^{(1)}(\pi)=0
\end{equation*}
and is nontrivial because
\begin{equation*}
I(y^{(1)})=\int_{0}^{\pi}w_{\alpha}(x)\left (y^{(1)}(x)\right)^{2}dx=1.
\end{equation*}
In addition, we also have for the solution
$$ D^{\alpha}_{0+}y^{(1)} ={}^{c}D^{\alpha}_{0+}y^{(1)}\in C[0,\pi]. $$
Let us observe that from the Dirichlet boundary conditions
it follows that $y^{(1)}$ also solves the FSLP (\ref{eq:SLE})-(\ref{eq:BC}) in $[0,\pi]$. \\
\textit{Step 5.} Now, let us restore the superscript on $y_{m}^{(1)}$ and show that $\left(y_{m}^{(1)}\right)_{m\in\N}$ itself converges to $y^{(1)}$. First, let us point out that for given $\lambda$ the solution of
 \begin{equation}\label{eq:SL:2}
\left[{}^{c}D^{\alpha}_{\pi-}p(x){}^{c}D^{\alpha}_{0+}+q(x)\right]y(x)= \lambda w_{\alpha}(x)y(x),
\end{equation}
subject to the boundary conditions
\begin{equation}\label{eq:BC:2}
y(0)=y(\pi)=0
\end{equation}
and the normalization condition
\begin{equation}\label{eq:IC:2}
\int_0^{\pi} w_{\alpha}(x)y^2\;dx=1
\end{equation}
is unique except for a sign. Next, let us assume that $y^{(1)}$ solves Sturm--Liouville equation \eqref{eq:SL:2} and that corresponding eigenvalue is $\lambda=\lambda^{(1)}$. In addition, suppose that $y^{(1)}$ is non trivial i.e., we can find $x_0\in [0,\pi]$ such that $y^{(1)}(x_0)\neq 0$ and choose the sign so that $y^{(1)}(x_0)>0$. Similarly, for all $m\in\N$, let $y_m^{(1)}$ solve \eqref{eq:SL:2} with corresponding eigenvalue $\lambda=\lambda_m^{(1)}$ and let us choose the signs so that $y_m^{(1)}(x_0)\geq 0$. Now, suppose that $\left(y_{m}^{(1)}\right)_{m\in\N}$ does not converge to $y^{(1)}$. It means that we can find another subsequence of $\left(y_{m}^{(1)}\right)_{m\in\N}$ such that it converges to another solution $\bar{y}^{(1)}$ of \eqref{eq:SL:2} with $\lambda=\lambda^{(1)}$. We know that for $\lambda=\lambda^{(1)}$ solution of \eqref{eq:SL:2} subject to \eqref{eq:BC:2} and \eqref{eq:IC:2} must be unique except for a sign, hence
\begin{equation*}
\bar{y}^{(1)}=-y^{(1)}
\end{equation*}
and we must have $\bar{y}^{(1)}(x_0)<0$. However, it is impossible because for all $m\in\N$ value of $y_m^{(1)}$ in $x_0$ is greater or equal zero. It means that we have contradiction and hence, choosing each $y_m^{(1)}$ with adequate sign, we obtain $y_m^{(1)}\rightarrow y^{(1)}$.

\textit{Step 6.} In order to find eigenfunction $y^{(2)}$ and the corresponding eigenvalue $\lambda^{(2)}$, we again minimize functional \eqref{eq:IF} subject to \eqref{eq:IC} and \eqref{eq:BC}, but now with an extra orthogonality condition
\begin{equation}\label{eq:OC}
\int_0^{\pi} w_{\alpha}(x)y(x) y^{(1)}(x)\;dx=0.
\end{equation}
If we approximate solution by
\begin{equation*}
y_{m}(x)= \frac{1}{\sqrt{w_{\alpha}}}\sum_{k=1}^{m} \beta_{k}\sin(kx),
\hspace{2cm} y_{m}(0)=y_{m}(\pi)=0,%~~m\in\N,
\end{equation*}
then we again receive quadratic form \eqref{eq:F}. However in this case admissible solutions are points satisfying \eqref{eq:sphere} together with
\begin{equation}\label{eq:hyperpl}
\frac{\pi}{2}\sum\limits_{k=1}^{m}\beta_k\beta_k^{(1)}=0,
\end{equation}
i.e., they lay in $(m-1)$-dimensional sphere. As before, we find that function $\tilde{J}([\beta])$ has a minimum $\lambda_m^{(2)}$ and there exists $\lambda^{(2)}$ such that
\begin{equation*}
\lambda^{(2)}=\lim\limits_{m\rightarrow\infty}\lambda_m^{(2)},
\end{equation*}
because $J(y)$ is bounded from below. Moreover, it is clear that the following relation:
\begin{equation}\label{eq:eig}
\lambda^{(1)}\leq\lambda^{(2)}
\end{equation}
holds. Now, let us denote by
\begin{equation*}
y_{m}^{(2)}(x)= \frac{1}{\sqrt{w_{\alpha}}}\sum_{k=1}^{m} \beta_{k}^{(2)}\sin(kx),
\end{equation*}
the linear combination achieving the minimum $\lambda_m^{(2)}$, where $\beta^{(2)}=(\beta_1^{(2)},\dots,\beta_m^{(2)})$ is the point satisfying \eqref{eq:sphere} and \eqref{eq:hyperpl}. By the same argument as before, we can prove that the sequence $(y_m^{(2)})_{m\in\N}$ converges uniformly to a limit function $y^{(2)}$, which satisfies the Strum-Liouville equation \eqref{eq:SLE} with $\lambda^{(2)}$, the boundary conditions \eqref{eq:BC}, normalization condition \eqref{eq:IC} and the orthogonality condition \eqref{eq:OC}. Therefore, solution $y^{(2)}$ of the FSLP corresponding to the eigenvalue $\lambda^{(2)}$ exists. Furthermore, because orthogonal functions cannot be linearly dependent, and since only one eigenfunction corresponds to each eigenvalue (except for a constant factor), we have the strict inequality
\begin{equation*}
\lambda^{(1)}<\lambda^{(2)}
\end{equation*}
instead of \eqref{eq:eig}. Finally, if we repeat the above procedure, with similar modifications, we can obtain eigenvalues $\lambda^{(3)},\lambda^{(4)},\dots$ and corresponding eigenfunctions $y^{(3)},y^{(4)},\dots$.
\end{proof}

%----------------------------------------------

\subsection{The First Eigenvalue}
\label{First Eigenvalue}

In this section we prove two theorems showing that the first eigenvalue of problem \eqref{eq:SLE}--\eqref{eq:BC} is a minimum value of certain functionals. As in the proof of Theorem~\ref{thm:exist} in the sequel, for simplicity, we assume that $a=0$ and $b=\pi$ in the problem \eqref{eq:SLE}--\eqref{eq:BC}.

\begin{theorem}\label{thm:FE}
Let $y^{(1)}$ be the eigenfunction, normalized to satisfy the isoperimetric constraint
\begin{equation}\label{eq:IC2}
I(y)=\int_0^{\pi}w_{\alpha}(x)y^2\;dx=1,
\end{equation}
associated to the first eigenvalue $\lambda^{(1)}$ of problem \eqref{eq:SLE}--\eqref{eq:BC} and assume that function $D^{\alpha}_{\pi-}(p(x){}^{c}D^{\alpha}_{0+}y)$ is continuous. Then, $y^{(1)}$ is a minimizer of the following variational functional:
\begin{equation}\label{eq:F2}
J(y)=\int_0^{\pi}\left[p(x) ({}^{c}D^{\alpha}_{0+}y)^{2}+q(x)y^{2}\right]\;dx ,
\end{equation}
in the class $C[0,\pi]$ with ${}^{c}D^{\alpha}_{0+}y\in C[0,\pi]$ subject to the boundary conditions
\begin{equation}\label{eq:BC2}
y(0)=y(\pi)=0
\end{equation}
and an isoperimetric constraint \eqref{eq:IC2}. Moreover,
\begin{equation*}
J(y^{(1)})=\lambda^{(1)}.
\end{equation*}
\end{theorem}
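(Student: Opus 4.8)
The plan is to show that any admissible $y$ (in $C[0,\pi]$ with $\Dl y\in C[0,\pi]$, satisfying the boundary conditions \eqref{eq:BC2} and the constraint \eqref{eq:IC2}) satisfies $J(y)\geq \lambda^{(1)}$, with equality at $y^{(1)}$. First I would verify the easy half: evaluating $J$ at $y^{(1)}$ and using the Sturm--Liouville equation \eqref{eq:SLE} together with the integration-by-parts formula \eqref{eq:IBP}. Concretely, write
\begin{equation*}
J(y^{(1)})=\int_0^{\pi}\left[p(x)(\Dl y^{(1)})^2+q(x)(y^{(1)})^2\right]dx,
\end{equation*}
and in the first term transform $\Dl y^{(1)}=\DRl y^{(1)}$ (since $y^{(1)}(0)=0$), so that $\int_0^{\pi} p\,\Dl y^{(1)}\,\DRl y^{(1)}\,dx$ can be integrated by parts using \eqref{eq:IBP} with $f=p\,\Dl y^{(1)}$ and $g=y^{(1)}$; the boundary term $f(x)\Ila y^{(1)}|_0^{\pi}$ is handled via the Dirichlet conditions (it involves $I^{1-\alpha}_{0,+}$ evaluated at endpoints — I would need the continuity hypothesis on $D^{\alpha}_{\pi-}(p\,\Dl y^{(1)})$ here to legitimize the manipulation and kill the boundary contribution). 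This yields $\int_0^{\pi} y^{(1)}\DRr(p\,\Dl y^{(1)})\,dx$, and adding the $q$-term and invoking \eqref{eq:SLE} gives $J(y^{(1)})=\lambda^{(1)}\int_0^{\pi} w_{\alpha}(y^{(1)})^2\,dx=\lambda^{(1)}$ by \eqref{eq:IC2}.

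For the lower bound, the natural approach is an eigenfunction-expansion / Rayleigh-quotient argument mirroring the classical Sturm--Liouville theory. By Theorem~\ref{thm:exist} the eigenfunctions $\{y^{(n)}\}$ form an orthogonal set with respect to the weight $w_{\alpha}$ and $\lambda^{(1)}\leq\lambda^{(2)}\leq\cdots$. One would like to expand an arbitrary admissible $y$ as $y=\sum_n c_n y^{(n)}$ in the $w_{\alpha}$-weighted $L^2$ inner product, compute $I(y)=\sum_n c_n^2=1$ and, via the same integration-by-parts identity as above applied to cross terms, $J(y)=\sum_n \lambda^{(n)} c_n^2\geq \lambda^{(1)}\sum_n c_n^2=\lambda^{(1)}$. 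Alternatively, and perhaps more in the spirit of this paper, one argues directly via the Ritz sequence: for the minimizing sequence $y_m$ from Step~1 of the proof of Theorem~\ref{thm:exist} we have $\tilde J([\beta])\geq \lambda_m^{(1)}$ for every admissible $[\beta]$, and since the trigonometric system $\{\sin(kx)/\sqrt{w_\alpha}\}$ is complete, any admissible $y$ can be approximated so that $J(y)=\lim_m \tilde J(\text{projection of }y)\geq \lim_m \lambda_m^{(1)}=\lambda^{(1)}$.

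The main obstacle I anticipate is justifying the completeness/density step rigorously in the fractional setting — i.e.\ that the span of $\{\sin(kx)/\sqrt{w_\alpha}\}$ is dense in the admissible class in a topology strong enough that both $J$ and $I$ pass to the limit. Unlike the classical case, $\Dl$ is nonlocal, so one needs $\Dl y_m \to \Dl y$ in $L^2$, which requires controlling $\Dl$ on the approximants; the continuity hypothesis on $D^{\alpha}_{\pi-}(p\,\Dl y)$ and the regularity assumptions (H1) on $w_\alpha$, $(\sqrt{w_\alpha})'$ Hölder of order $\leq\alpha-\tfrac12$, are presumably exactly what is needed to make the relevant fractional integrals and derivatives of $y/\sqrt{w_\alpha}$ behave well (cf.\ the Appendix lemmas \ref{lem:D}, \ref{lem:E}). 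A secondary subtlety is the boundary term in the integration by parts: one must check that $\Ila y$ and the conjugate quantities vanish or cancel at $x=0,\pi$, using $y(0)=y(\pi)=0$ together with the continuity assumption, rather than assuming it outright. Once these analytic points are settled, the inequality $J(y)\geq\lambda^{(1)}$ and the equality $J(y^{(1)})=\lambda^{(1)}$ combine to give that $y^{(1)}$ is a minimizer.
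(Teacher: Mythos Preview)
Your computation of $J(y^{(1)})=\lambda^{(1)}$ via integration by parts is essentially what the paper does. But your strategy for the lower bound $J(y)\geq\lambda^{(1)}$ diverges from the paper and runs into exactly the obstacles you flag---obstacles that the paper's machinery does not resolve.

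The paper does not attempt to prove $J(y)\geq\lambda^{(1)}$ directly for arbitrary admissible $y$. Instead it argues via the Euler--Lagrange necessary condition for the fractional isoperimetric problem (Theorem~\ref{thm:EL}): if $y$ is a minimizer in the admissible class, then there is a multiplier $\lambda$ for which $y$ satisfies $D^{\alpha}_{\pi-}(p\,\Dl y)+qy=\lambda w_{\alpha}y$. The continuity hypothesis on $D^{\alpha}_{\pi-}(p\,\Dl y)$, together with continuity of $\Dr(p\,\Dl y)$, forces $p\,\Dl y|_{x=\pi}=0$, so the Riemann--Liouville and Caputo versions of the equation coincide and $y$ is a genuine eigenfunction. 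Multiplying by $y$ and integrating by parts (your same calculation) yields $J(y)=\lambda$. Thus the minimal value of $J$, if attained, is an eigenvalue; since $\lambda^{(1)}$ is the least eigenvalue by Theorem~\ref{thm:exist} and $y^{(1)}$ is admissible with $J(y^{(1)})=\lambda^{(1)}$, the conclusion follows.

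Your eigenfunction-expansion route would require completeness of $\{y^{(n)}\}$ in $L^{2}_{w_{\alpha}}$ together with convergence strong enough to pass $J$ through the series; Theorem~\ref{thm:exist} establishes only orthogonality, not completeness, so this cannot be carried out with what is available. Your Ritz-density route would require that $\Dl$ of the trigonometric projection of an \emph{arbitrary} admissible $y$ converge in $L^{2}$ to $\Dl y$; Lemmas~\ref{lem:D} and~\ref{lem:E} prove such convergence only for the specific test functions $h$ manufactured in Lemma~\ref{lem:B}, not for general $y$, and the H\"older hypothesis in (H1) is tailored to that purpose. Neither gap is closed by the tools in the paper. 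The Euler--Lagrange shortcut sidesteps both: it converts the minimization statement into an assertion about stationary points, where the spectrum from Theorem~\ref{thm:exist} does all the remaining work.
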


\begin{proof}
Suppose that $y\in C [0,\pi]$ is a minimizer of $J$ and ${}^{c}D^{\alpha}_{0+}y\in C[0,\pi]$. Then, by Theorem~\ref{thm:EL}, there is number $\lambda$ such that $y$ satisfies equation
\begin{equation}\label{eq:SLE0}
\left[D^{\alpha}_{\pi-}p(x){}^{c}D^{\alpha}_{0+}+q(x)\right]y(x)= \lambda w_{\alpha}(x)y(x),
\end{equation}
and conditions \eqref{eq:IC2}, \eqref{eq:BC2}. Since $D^{\alpha}_{\pi-}(p(x){}^{c}D^{\alpha}_{0+}y)$ and ${}^{c}D^{\alpha}_{\pi-}(p(x){}^{c}D^{\alpha}_{0+}y)$ are continuous, it follows that $p(x){}^{c}D^{\alpha}_{0+}y(x)\left|_{x=\pi}\right.=0$. Therefore equation \eqref{eq:SLE0} is equivalent to
\begin{equation}\label{eq:SLE2}
\left[{}^{c}D^{\alpha}_{\pi-}p(x){}^{c}D^{\alpha}_{0+}+q(x)\right]y(x)= \lambda w_{\alpha}(x)y(x).
\end{equation}
Let us multiply \eqref{eq:SLE0} by $y$ and integrate it on the interval $[0,\pi]$, then
\begin{equation*}
\int_0^{\pi}\left(y\cdot D^{\alpha}_{\pi-}(p(x){}^{c}D^{\alpha}_{0+}y)+q(x)y^2\right)\;dx=\lambda\int_0^{\pi}w_{\alpha}(x)y^2\;dx.
\end{equation*}
Applying the integration by the parts formula for fractional derivatives (cf. \eqref{eq:IBP}) and having in mind that conditions \eqref{eq:BC2}, \eqref{eq:IC2} and $p(x){}^{c}D^{\alpha}_{0+}y(x)\left|_{x=b}\right.=0$ hold, one has
\begin{equation*}
\int_0^{\pi}\left(\left(\Dl y\right)^2 p(x)+q(x)y^2\right)\;dx=\lambda.
\end{equation*}
Hence
\begin{equation*}
J(y)=\lambda.
\end{equation*}
Any solution to problem \eqref{eq:IC2}--\eqref{eq:BC2} which satisfies equation \eqref{eq:SLE2} must be nontrivial since \eqref{eq:IC2} holds, so $\lambda$ must be an eigenvalue. Moreover, according to Theorem~\ref{thm:exist} there is the least element in the spectrum being eigenvalue $\lambda^{(1)}$ and the corresponding eigenfunction $y^{(1)}$ normalized to meet the isoperimetric condition. Therefore $J(y^{(1)})=\lambda^{(1)}$.
\end{proof}

\begin{definition}
We will call functional $R$ defined by
\begin{equation*}
R(y)=\frac{J(y)}{I(y)},
\end{equation*}
where $J(y)$ is given by \eqref{eq:F2} and $I(y)$ by \eqref{eq:IC2}, the Rayleigh quotient for the fractional Sturm--Liouville problem \eqref{eq:SLE}--\eqref{eq:BC}.
\end{definition}

\begin{theorem}\label{thm:RQ}
Let us assume that function $y\in C[0,\pi]$ with ${}^{c}D^{\alpha}_{0+}y\in C[0,\pi]$, satisfying boundary conditions $y(0)=y(\pi)=0$ and being nontrivial, is a minimizer of Rayleigh quotient $R$ for the Sturm--Liouville problem \eqref{eq:SLE}--\eqref{eq:BC}. Moreover assume that function $D^{\alpha}_{\pi-}(p(x){}^{c}D^{\alpha}_{0+}y)$ is continuous. Then, value of $R$ in $y$ is equal to the first eigenvalue $\lambda^{(1)}$ i.e., $R(y)=\lambda^{(1)}$.
\end{theorem}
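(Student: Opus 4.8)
The plan is to show that the minimum value of the Rayleigh quotient $R$ coincides with the minimum value of $J$ under the normalization constraint \eqref{eq:IC2}, and then invoke Theorem~\ref{thm:FE}. First I would exploit the scale invariance of $R$: for any nontrivial admissible $y$ and any $c\neq 0$, one has $R(cy)=R(y)$ since both $J$ and $I$ are homogeneous of degree two. Hence, given the minimizer $y$ of $R$ with $R(y)=:\mu$, I can rescale it to $\tilde{y}=y/\sqrt{I(y)}$, which still satisfies the boundary conditions $\tilde y(0)=\tilde y(\pi)=0$, still has ${}^{c}D^{\alpha}_{0+}\tilde y\in C[0,\pi]$, still makes $D^{\alpha}_{\pi-}(p\,{}^{c}D^{\alpha}_{0+}\tilde y)$ continuous, and now satisfies $I(\tilde y)=1$ and $J(\tilde y)=R(\tilde y)=R(y)=\mu$.

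Next I would argue that $\tilde y$ is in fact a minimizer of $J$ subject to \eqref{eq:BC2} and \eqref{eq:IC2}. Indeed, for any competitor $z$ in the class $C[0,\pi]$ with ${}^{c}D^{\alpha}_{0+}z\in C[0,\pi]$, $z(0)=z(\pi)=0$ and $I(z)=1$, we have $J(z)=J(z)/I(z)=R(z)\geq R(y)=\mu=J(\tilde y)$, using that $y$ minimizes $R$ over all admissible nontrivial functions (in particular over those with $I=1$). So $\tilde y$ attains the minimum of $J$ on that constraint set.

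Now Theorem~\ref{thm:FE} applies: it identifies the minimum value of $J$ on exactly this class with the first eigenvalue, giving $J(\tilde y)=\lambda^{(1)}$ (and $\tilde y$ is, up to a constant, the eigenfunction $y^{(1)}$). Tracing back, $R(y)=R(\tilde y)=J(\tilde y)=\lambda^{(1)}$, which is the claim. The main obstacle is purely a matter of bookkeeping: one must verify that the rescaled function $\tilde y$ genuinely lies in the admissible class of Theorem~\ref{thm:FE} — in particular that the continuity hypothesis on $D^{\alpha}_{\pi-}(p(x){}^{c}D^{\alpha}_{0+}\tilde y)$ is preserved under multiplication by the constant $1/\sqrt{I(y)}$, which it is by linearity of the operators involved — and that the minimizer of $R$ indeed competes against all $I=1$ functions rather than just a subclass, so that the inequality $R(z)\geq R(y)$ is legitimate. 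There is also a subtlety worth a sentence: one should note that a minimizer of $J$ under the constraint $I=1$ need not a priori be unique, but Theorem~\ref{thm:FE} shows its value is $\lambda^{(1)}$ regardless, which is all that is needed here.
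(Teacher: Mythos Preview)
Your argument is correct and takes a genuinely different route from the paper. The paper proceeds by computing the first variation of $R$ directly: it perturbs $y$ to $y+h\eta$, differentiates the quotient $R(y+h\eta)=\psi(h)/\phi(h)$ at $h=0$, integrates by parts via \eqref{eq:IBP}, and obtains the Euler--Lagrange equation $D^{\alpha}_{\pi-}(p\,{}^{c}D^{\alpha}_{0+}y)+qy=\lambda w_\alpha y$ with $\lambda=R(y)$; it then uses the continuity hypothesis to convert this to the Caputo form \eqref{eq:SLE}, concludes $\lambda$ is an eigenvalue, and finishes by observing $R(y^{(m)})=\lambda^{(m)}$ for every $m$, whence the minimality of $\lambda$ forces $\lambda=\lambda^{(1)}$.

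Your approach instead exploits the degree-two homogeneity of $J$ and $I$ to normalize the minimizer and reduce the problem to Theorem~\ref{thm:FE}, never touching first variations or integration by parts yourself. This is shorter and makes clear that Theorem~\ref{thm:RQ} is essentially a reformulation of Theorem~\ref{thm:FE} under scale invariance. The paper's route has the virtue of being self-contained---it does not lean on Theorem~\ref{thm:FE}---and also makes explicit that the minimizer of $R$ itself solves the Sturm--Liouville equation, which your argument obtains only indirectly through the identification of $\tilde y$ with $y^{(1)}$ up to sign. One small caution: your invocation of Theorem~\ref{thm:FE} relies on reading that theorem as ``the minimum value of $J$ under \eqref{eq:IC2}--\eqref{eq:BC2} equals $\lambda^{(1)}$ whenever some minimizer satisfies the continuity hypothesis''; this is what its proof establishes, though the stated hypothesis there is phrased somewhat ambiguously. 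Since you verify the continuity for $\tilde y$, the proof of Theorem~\ref{thm:FE} does apply as you need.
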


\begin{proof}
Suppose that function $y\in C [0,\pi]$ with ${}^{c}D^{\alpha}_{0+}y\in C[0,\pi]$, satisfying $y(0)=y(\pi)=0$ and nontrivial is a minimizer of Rayleigh quotient $R$ and that value of $R$ in $y$ is equal to $\lambda$, i.e.,
$$
R(y)=\frac{J(y)}{I(y)}=\lambda.
$$
Consider one-parameter family of curves
\begin{equation*}
\hat{y}=y+h\eta,~~\left|h\right|\leq\varepsilon,
\end{equation*}
where $\eta\in C^1[0,\pi]$ is such that $\eta(0)=\eta(\pi)=0$, $\eta\neq 0$ and define the following functions
\begin{equation*}
\fonction{\phi}{[-\varepsilon,\varepsilon]}{\R}{h}{I(y+h\eta)=\displaystyle\int_0^{\pi}w_{\alpha}(x)(y+h\eta)^2\;dx,}
\end{equation*}
\begin{equation*}
\fonction{\psi}{[-\varepsilon,\varepsilon]}{\R}{h}{J(y+h\eta)=\displaystyle\int_0^{\pi}\left[p(x) ({}^{c}D^{\alpha}_{0+}(y+h\eta))^{2}+q(x)(y+h\eta)^{2}\right]dx}
\end{equation*}
and
\begin{equation*}
\fonction{\zeta}{[-\varepsilon,\varepsilon]}{\R}{h}{R(y+h\eta)=\frac{J(y+h\eta)}{I(y+h\eta)}.}
\end{equation*}
Since $\zeta$ is of class $C^1$ on $[-\varepsilon,\varepsilon]$ and
$$
\zeta(0)\leq\zeta (h),~~\left|h\right|\leq\varepsilon,
$$
we deduce that
\begin{equation*}
\zeta'(0)=\left.\frac{d}{dh}R(y+h\eta)\right|_{h=0}=0.
\end{equation*}
Moreover, notice that
\begin{equation*}
\zeta'(h)=\frac{1}{\phi(h)}\left(\psi'(h)-\frac{\psi(h)}{\phi(h)}\phi'(h)\right)
\end{equation*}
and that
\begin{equation*}
\psi'(0)=\left.\frac{d}{dh}J(y+h\eta)\right|_{h=0}=2\int_0^{\pi}\left[p(x)\cdot {}^{c}D^{\alpha}_{0+}y\cdot{}^{c}D^{\alpha}_{0+}\eta+q(x)y\eta\right]\;dx,
\end{equation*}
\begin{equation*}
\phi'(0)=\left.\frac{d}{dh}I(y+h\eta)\right|_{h=0}=2\int_0^{\pi}\left[w_{\alpha}(x)y\eta\right]\;dx.
\end{equation*}
Therefore
\begin{multline*}
\zeta'(0)=\left.\frac{d}{dh}R(y+h\eta)\right|_{h=0}\\
=\frac{2}{I(y)}\left(\int_0^{\pi}\left[p(x)\cdot {}^{c}D^{\alpha}_{0+}y\cdot{}^{c}D^{\alpha}_{0+}\eta+q(x)y\eta\right]\;dx-\frac{J(y)}{I(y)}\int_0^{\pi}\left[w_{\alpha}(x)y\eta\right]\;dx\right)=0.
\end{multline*}
Having in mind that $\frac{J(y)}{I(y)}=\lambda$ and $\eta(0)=\eta(\pi)=0$, using the integration by parts formula \eqref{eq:IBP} we obtain
\begin{equation*}
\int_0^{\pi}\left(\left[D^{\alpha}_{\pi-}p(x){}^{c}D^{\alpha}_{0+}+q(x)\right]y(x)-\lambda w_{\alpha}(x)y(x)\right)\eta(x)\;dx=0.
\end{equation*}
Now, applying the fundamental lemma of the calculus of variations we arrive at
\begin{equation}\label{aa}
\left[D^{\alpha}_{\pi-}p(x){}^{c}D^{\alpha}_{0+}+q(x)\right]y(x)=\lambda w_{\alpha}(x)y(x).
\end{equation}
Under our assumptions $p(x){}^{c}D^{\alpha}_{0+}y(x)\left|_{x=\pi}\right.=0$ and therefore equation \eqref{aa} is equivalent to
\begin{equation}\label{eq:aa}
\left[{}^{c}D^{\alpha}_{\pi-}p(x){}^{c}D^{\alpha}_{0+}+q(x)\right]y(x)= \lambda w_{\alpha}(x)y(x).
\end{equation}
Since $y\neq 0$ we deduce that number $\lambda$ is an eigenvalue of \eqref{eq:aa}.
On the other hand, let $\lambda^{(m)}$ be an eigenvalue and $y^{(m)}$ the corresponding eigenfunction, then
\begin{equation}\label{eq:SLRa}
\left[{}^{c}D^{\alpha}_{\pi-}p(x){}^{c}D^{\alpha}_{0+}+q(x)\right]y^{(m)}(x)=\lambda^{(m)} w_{\alpha}(x)y^{(m)}(x).
\end{equation}
Similarly to the proof of Theorem~\ref{thm:FE}, we can obtain
\begin{equation*}
\frac{\int_0^{\pi}\left(\left(\Dl y^{(m)}\right)^2 p(x)+q(x)(y^{(m)})^2 \right)\;dx}{\int_0^{\pi}\lambda^{(m)} w_{\alpha}(x)(y^{(m)})^2\;dx}=\lambda^{(m)},
\end{equation*}
for any $m\in\N$. That is $R(y^{(m)})=\frac{J(y^{(m)})}{I(y^{(m)})}=\lambda^{(m)}$. Finally, since minimum value of $R$ at $y$ is equal to $\lambda$, i.e.,
\begin{equation*}
\lambda\leq R(y^{(m)})=\lambda^{(m)}~~\forall m\in\N,
\end{equation*}
we have $\lambda=\lambda^{(1)}$.
\end{proof}

%----------------------------------------------

\subsection{An Illustrative Example}
\label{Example}
Let us consider the following fractional oscillator equation
\begin{equation}\label{eq:example}
\left[p\cdot{}^{c}D^{\alpha}_{b-}{}^{c}D^{\alpha}_{a+}-\lambda\right]y(x)=0,
\end{equation}
where $y(a)=y(b)=0$ and parameter $p>0$. One can easily check that problem of finding nontrivial solutions to equation \eqref{eq:example} and corresponding values of parameter $\lambda$ is a particular case of problem \eqref{eq:SLE}--\eqref{eq:BC} with $p(x)\equiv p$, $q(x)\equiv 0$ and $w_{\alpha}(x)\equiv 1$. The corresponding
minimized functional is
\begin{equation*}
J_{\alpha}(y)= \int_{a}^{b}p\cdot  ({}^{c}D^{\alpha}_{a+}y)^{2}dx= ||\sqrt{p}\;\;{}^{c}D^{\alpha}_{a+}y||^{2}_{L^{2}}
\end{equation*}
with the isoperimetric condition
\begin{equation*}
\int_{a}^{b}y^{2}(x)dx
=1.
\end{equation*}
Let us fix the value of parameter $p$ and  assume that orders $\alpha_{1}, \alpha_{2}$ fulfill the condition: $\frac{1}{2}<\alpha_{1}<\alpha_{2}<1$. Then, we obtain for functionals $J_{\alpha_{1}}, J_{\alpha_{2}}$ the following relation

\begin{eqnarray*}
&& J_{\alpha_{1}}(y)= ||\sqrt{p}{}^{c}D^{\alpha_{1}}_{a+}y||^{2}_{L^{2}}= ||\sqrt{p}I^{1-\alpha_{1}}_{a+}Dy||^{2}_{L^{2}}=||\sqrt{p}I^{\alpha_2-\alpha_1}_{a+}I^{1-\alpha_2}_{a+}Dy||^{2}_{L^{2}}\\
&& \leq K_{\alpha_{2}-\alpha_{1}}^{2}\cdot ||\sqrt{p}{}^{c}D^{\alpha_2}_{a+}y||^{2}_{L^{2}}=K_{\alpha_{2}-\alpha_{1}}^{2}J_{\alpha_{2}}(y),\nonumber
\end{eqnarray*}

where we denoted
\begin{equation*}
K_{\alpha_{2}-\alpha_{1}}:= \frac{(b-a)^{\alpha_2-\alpha_1}}{\Gamma(\alpha_2-\alpha_1+1)}.
\end{equation*}
We observe that in the above estimation two cases occur
\begin{eqnarray*}
& K_{\alpha_{2}-\alpha_{1}}\leq 1 & J_{\alpha_{1}}(y) \leq J_{\alpha_{2}}(y)\\
& K_{\alpha_{2}-\alpha_{1}}>1 & J_{\alpha_{1}}(y) \leq K^{2}_{\alpha_{2}-\alpha_{1}}\cdot J_{\alpha_{2}}(y).
\end{eqnarray*}
The relations between functionals for different values of fractional order
lead to the set of inequalities for eigenvalues $\lambda^{(j)}$ valid for any  $j\in \mathbb{N}$:
\begin{eqnarray*}
& K_{\alpha_{2}-\alpha_{1}}\leq 1 & \lambda^{(j)}(\alpha_1)\leq \lambda^{(j)}(\alpha_2)\\
& K_{\alpha_{2}-\alpha_{1}}>1 & \lambda^{(j)}(\alpha_1)\leq K_{\alpha_{2}-\alpha_{1}}^{2} \cdot\lambda^{(j)}(\alpha_2).
\end{eqnarray*}
In particular, when order $\alpha_{2}=1$ we get

\begin{eqnarray*}
&&J_{\alpha_{1}}(y)=||\sqrt{p}{}^{c}D^{\alpha_{1}}_{a+}y||^{2}_{L^{2}}= ||\sqrt{p}I^{1-\alpha_{1}}_{a+}Dy||^{2}_{L^{2}} \\
&& \leq  K^{2}_{1-\alpha_{1}}\cdot ||\sqrt{p}Dy||^{2}_{L^{2}}=K^{2}_{1-\alpha_{1}}J_{1}(y) \nonumber
\end{eqnarray*}

and the following relations dependent on the value of constant $K_{1-\alpha_{1}}$
\begin{eqnarray*}
& K_{1-\alpha_{1}}\leq 1 & J_{\alpha_{1}}(y) \leq J_{1}(y)\\
& K_{1-\alpha_{1}}>1 & J_{\alpha_{1}}(y) \leq K^{2}_{1-\alpha_{1}}\cdot J_{1}(y).
\end{eqnarray*}
Thus comparing the eigenvalues for the fractional and the classical harmonic oscillator equation for boundary conditions $y(a)=y(b)=0$ we conclude that the respective classical eigenvalues are
higher than the ones resulting from the fractional problem for any  $j\in \mathbb{N}$, namely
\begin{eqnarray}\label{evalu}
& K_{1-\alpha_{1}}\leq 1 & \lambda^{(j)}(\alpha_1)\leq \lambda^{(j)}(1)=p\left (\frac{j\pi}{b-a}\right)^{2}\\
& K_{1-\alpha_{1}}>1 & \lambda^{(j)}(\alpha_1)\leq K^{2}_{1-\alpha_{1}} \cdot\lambda^{(j)}(1)=p\left (\frac{j\pi}{(b-a)^{\alpha_{1}}\Gamma(2-\alpha_{1})}\right)^{2}.
\end{eqnarray}

%----------------------------------------------
\section{Appendix}
\label{sec:ap}

 We shall prove two lemmas, concerning certain convergence properties of fractional and classical derivatives, that play an important role  in the proof of Theorem~\ref{thm:exist}. Let us begin with the following definition of H\"{o}lder continuous functions.

\begin{definition}
Function $g$ is H\"{o}lder continuous in interval $[a,b]$ with coefficient $0<\beta\leq 1$ iff
\begin{equation}
\sup_{x,y\in [a,b], \; x\neq y}\frac{|f(x)-f(y)|}{|x-y|^{\beta}}<\infty.
\end{equation}
We denote this class of H\"{o}lder continuous functions as $C^{\beta}_{H}[a,b]$.
\end{definition}

\begin{lemma}\label{lem:D}
Let  $\alpha\in (0,1) $, functions $w,g\in C^{1}[0,\pi] \cap C^{1}_{H}[-\pi,\pi]$ be odd functions in  $ [-\pi,\pi]$ such that $w'',g''\in L^{2}[0,\pi]$. If we denote as $g_{m}$ the $m$-th sum of the Fourier series
of function $g$, then  the following
convergences are valid in $[0,\pi]$
\begin{eqnarray}
&& \lim_{m\longrightarrow \infty} ||{}^{c}D^{\alpha}_{0,+}g_{m}-{}^{c}D^{\alpha}_{0,+}g||_{L^{2}}=0 \label{con1} \\
&& \lim_{m\longrightarrow \infty} ||D{}^{c}D^{\alpha}_{0,+}g_{m}-D{}^{c}D^{\alpha}_{0,+}g||_{L^{1}}=0\label{con2} \\
&& \lim_{m\longrightarrow \infty} ||{}^{c}D^{\alpha}_{0,+}w g_{m}-{}^{c}D^{\alpha}_{0,+}w g||_{L^{2}}=0 \label{con3}\\
&& \lim_{m\longrightarrow \infty} ||D{}^{c}D^{\alpha}_{0,+}w g_{m}-D{}^{c}D^{\alpha}_{0,+}w g||_{L^{1}}=0.\label{con4}
\end{eqnarray}
\end{lemma}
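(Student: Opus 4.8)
\textbf{Proof proposal for Lemma~\ref{lem:D}.}

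The plan is to reduce everything to two classical facts about Fourier series of the odd functions $g$ and $wg$: (i) if $g\in C^1[0,\pi]$ is odd on $[-\pi,\pi]$, then the Fourier sine partial sums $g_m$ satisfy $g_m\to g$ uniformly on $[0,\pi]$, and (ii) since $g''\in L^2[0,\pi]$, the term-by-term differentiated series converges to $g'$ in $L^2[0,\pi]$, i.e.\ $\|g_m'-g'\|_{L^2}\to 0$ (and likewise for $wg$, using $w',w''\in L^2$ together with the product rule to see $(wg)'=w'g+wg'\in L^2$ and $(wg)''\in L^2$). The key bridge to the fractional side is the representation ${}^{c}D^{\alpha}_{0,+}f = I^{1-\alpha}_{0,+}Df$ valid for $f\in AC[0,\pi]$ (recalled after the Caputo definition), so that ${}^{c}D^{\alpha}_{0,+}g_m - {}^{c}D^{\alpha}_{0,+}g = I^{1-\alpha}_{0,+}(g_m'-g')$; then \eqref{con1} follows immediately from the boundedness \eqref{K} of $I^{1-\alpha}_{0,+}$ on $L^2(0,\pi)$, giving $\|{}^{c}D^{\alpha}_{0,+}(g_m-g)\|_{L^2}\le K_{1-\alpha}\|g_m'-g'\|_{L^2}\to 0$. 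The same computation with $wg$ in place of $g$ yields \eqref{con3}.

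For \eqref{con2} I would write $D{}^{c}D^{\alpha}_{0,+}f = D I^{1-\alpha}_{0,+}Df = D^{\alpha}_{0,+}(Df)$; since $g_m'$ is a trigonometric polynomial this is smooth, while for $g$ we need $D^{1-\alpha}_{0,+}$ applied to $g'$ to make sense. Here the assumption $g''\in L^2[0,\pi]$ is used: $g'\in AC[0,\pi]$ (being the integral of $g''\in L^2\subset L^1$), so $D^{\alpha}_{0,+}g' = {}^{c}D^{\alpha}_{0,+}g' + g'(0)\frac{x^{-\alpha}}{\Gamma(1-\alpha)} = I^{1-\alpha}_{0,+}g'' + g'(0)\frac{x^{-\alpha}}{\Gamma(1-\alpha)}$. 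Thus
\begin{equation*}
D{}^{c}D^{\alpha}_{0,+}g_m - D{}^{c}D^{\alpha}_{0,+}g = I^{1-\alpha}_{0,+}(g_m''-g'') + (g_m'(0)-g'(0))\frac{x^{-\alpha}}{\Gamma(1-\alpha)},
\end{equation*}
and I would estimate the $L^1$ norm of each piece: the first by \eqref{K} (with $p=1$), $\|I^{1-\alpha}_{0,+}(g_m''-g'')\|_{L^1}\le K_{1-\alpha}\|g_m''-g''\|_{L^1}\le K_{1-\alpha}\sqrt{\pi}\,\|g_m''-g''\|_{L^2}\to 0$; the second by $\|x^{-\alpha}\|_{L^1(0,\pi)}=\frac{\pi^{1-\alpha}}{1-\alpha}<\infty$ together with $g_m'(0)\to g'(0)$, which holds because $g_m'\to g'$ uniformly on $[0,\pi]$ (the differentiated sine series of a $C^1_H$ function converges uniformly, the Hölder/Dini hypothesis on $g'$ being exactly what is needed here) — in fact since $g$ is odd and $C^1$, $g'(0)$ and each $g_m'(0)$ are the relevant cosine-series values at $0$. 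The same argument with $wg$, whose second derivative is $w''g+2w'g'+wg''\in L^2$ and whose first derivative is Hölder continuous on $[-\pi,\pi]$, gives \eqref{con4}.

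The main obstacle is the boundary-term control $g_m'(0)\to g'(0)$ and $(wg)_m'(0)\to (wg)'(0)$: $L^2$ convergence of the differentiated series alone does not give pointwise convergence at the endpoint, so this is precisely where the hypothesis $g,w\in C^1_H[-\pi,\pi]$ (Hölder continuity of the derivatives) enters, via a Dini-type criterion guaranteeing uniform convergence of the differentiated Fourier series on the closed interval; I would state this as a classical lemma and cite it. A secondary technical point is checking that $wg$ and its Fourier partial sums $wg_m$ fit the hypotheses — note the lemma is applied in Step~3 with the combination $wg_m$ where $g_m$ is the sine partial sum of $g$, so one must verify $\|{}^{c}D^{\alpha}_{0,+}(wg_m)-{}^{c}D^{\alpha}_{0,+}(wg)\|_{L^2}\to 0$ and its $D$-counterpart, which follows by writing $wg_m - wg = w(g_m-g)$ and applying the Leibniz-type expansions above together with $\|g_m-g\|_{C^1_H}$-type bounds; the odd extension and the $C^1_H[-\pi,\pi]$ assumption on $w$ ensure the product stays in the required class.
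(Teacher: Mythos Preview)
Your proposal is correct and follows essentially the same route as the paper: the paper likewise writes ${}^cD^\alpha_{0,+}(g_m-g)=I^{1-\alpha}_{0,+}(g_m'-g')$ and invokes \eqref{K} for \eqref{con1} and \eqref{con3}, and for \eqref{con2}, \eqref{con4} uses exactly your decomposition $D\,{}^cD^\alpha_{0,+}f = I^{1-\alpha}_{0,+}f'' + f'(0)\,\frac{x^{-\alpha}}{\Gamma(1-\alpha)}$, estimating the first piece in $L^1$ via $L^2$ and controlling the boundary term through the pointwise convergence $g_m'(0)\to g'(0)$ (which, as you correctly isolate, is where the $C^1_H$ hypothesis is used). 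Your observation that in \eqref{con3}--\eqref{con4} one is dealing with $w\cdot g_m$ rather than the Fourier partial sums of $wg$, and must therefore expand $(wg_m)'-(wg)'$ and $(wg_m)''-(wg)''$ via the product rule, is precisely how the paper handles those two convergences.
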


\begin{proof}
We can apply equation \eqref{K} and  estimate the $||{}^{c}D^{\alpha}_{0,+}g_{m}-{}^{c}D^{\alpha}_{0,+}g||_{L^{2}} $ norm in $[0,\pi]$ as follows
$$ ||{}^{c}D^{\alpha}_{0,+}g_{m}-{}^{c}D^{\alpha}_{0,+}g||_{L^{2}} =  ||I^{1-\alpha}_{0,+}(g'_{m}-g')||_{L^{2}}\leq K_{1-\alpha} \cdot  ||g'_{m}-g'||_{L^{2}}.$$
For odd  functions from the $C^{1}[0,\pi]\cap C^{1}_{H}[-\pi,\pi]$ space,  $g'_{m}$ is the $m$-th sum of the Fourier series of the derivative $g'$. Hence
in interval $[0,\pi]$
$$\lim_{m\longrightarrow \infty} ||g'_{m}-g'||_{L^{2}}=0$$
and from the above inequalities it follows that (\ref{con1}) is valid in $[0,\pi]$
$$ \lim_{m\longrightarrow \infty}||{}^{c}D^{\alpha}_{0,+}g_{m}-{}^{c}D^{\alpha}_{0,+}g||_{L^{2}}=0.$$
Let us observe that for $x>0$
$$ D{}^{c}D^{\alpha}_{0,+}g_{m} (x)= D^{\alpha}_{0,+}g'_{m}(x)={}^{c}D^{\alpha}_{0,+}g'_{m}(x)+\frac{g_{m}'(0)\cdot x^{-\alpha}}{\Gamma(1-\alpha)}$$
$$ D{}^{c}D^{\alpha}_{0,+}g (x)= D^{\alpha}_{0,+}g'(x)={}^{c}D^{\alpha}_{0,+}g'(x)+\frac{g'(0)\cdot x^{-\alpha}}{\Gamma(1-\alpha)}.$$
Therefore  we can  estimate the distance between $D{}^{c}D^{\alpha}_{0,+}g_{m}$ and $D{}^{c}D^{\alpha}_{0,+}g $ in interval $[0,\pi]$
using (\ref{K}) for $\beta=1-\alpha$ and $p=1$
$$ ||D{}^{c}D^{\alpha}_{0,+}g_{m}-D{}^{c}D^{\alpha}_{0,+}g||_{L^{1}}\leq $$
$$ \leq ||{}^{c}D^{\alpha}_{0,+}(g'_{m}-g')||_{L^{1}}+ ||(g'_{m}(0)-g'(0))\cdot \frac{x^{-\alpha}}{\Gamma(1-\alpha)}||_{L^{1}}=$$
$$  = ||I^{1-\alpha}_{0,+}(g''_{m}-g'')||_{L^{1}}+|g'_{m}(0)-g'(0)|\cdot ||\frac{x^{-\alpha}}{\Gamma(1-\alpha)}||_{L^{1}}\leq $$
$$ \leq K_{1-\alpha}\cdot ||g''_{m}-g''||_{L^{1}}+|g'_{m}(0)-g'(0)|\cdot \frac{\pi^{1-\alpha}}{\Gamma(2-\alpha)}\leq $$
$$ \leq K_{1-\alpha}\cdot \sqrt{\pi}\cdot ||g''_{m}-g''||_{L^{2}}+|g'_{m}(0)-g'(0)|\cdot \frac{\pi^{1-\alpha}}{\Gamma(2-\alpha)}.$$
By assumptions we have in $[-\pi,\pi]$ (thence also in $[0,\pi]$)
$$ \lim_{m\longrightarrow \infty}||g''_{m}-g''||_{L^{2}}=0\quad \lim_{m\longrightarrow \infty}|g'_{m}(0)-g'(0)|=0.$$
Hence we conclude that (\ref{con2}) is valid. \\
The convergence given in (\ref{con3}) follows from (\ref{con1}), namely
$$  ||{}^{c}D^{\alpha}_{0,+}w g_{m}-{}^{c}D^{\alpha}_{0,+}w g||_{L^{2}} = $$
$$= ||I^{1-\alpha}_{0,+}\left[\left (w g_{m}\right)'-\left (w g\right)'\right]||_{L^{2}} \leq$$
$$\leq ||I^{1-\alpha}_{0,+}w(g'_{m}-g')||_{L^{2}} +||I^{1-\alpha}_{0,+}\left (w\right)'(g_{m}-g)||_{L^{2}}\leq $$
$$ \leq K_{1-\alpha}\cdot ||w(g'_{m}-g')||_{L^{2}} +K_{1-\alpha}\cdot ||\left (w\right)'(g_{m}-g)||_{L^{2}}\leq $$
$$ \leq K_{1-\alpha} \left (||w||\cdot ||g'_{m}-g'||_{L^{2}} +||\left (w\right)'|| \cdot ||g_{m}-g||_{L^{2}}\right),$$
where $||\cdot||$ denotes the supremum norm in the $C[0,\pi]$ space. From assumptions of our lemma it follows that in $[0,\pi]$
$$\lim_{m\longrightarrow \infty}||g'_{m}-g'||_{L^{2}}=0\quad \lim_{m\longrightarrow \infty}||g_{m}-g||_{L^{2}}=0.$$
Thus convergence (\ref{con3}) is valid.\\
To prove convergence (\ref{con4}) we start by observing that for $x>0$
$$ D{}^{c}D^{\alpha}_{0,+}w g_{m} (x)= D^{\alpha}_{0,+}\left(w g_{m}\right)'(x)={}^{c}D^{\alpha}_{0,+}\left(w g_{m}\right)'(x)+\frac{(w g_{m})'(0)\cdot x^{-\alpha}}{\Gamma(1-\alpha)}$$
$$ D{}^{c}D^{\alpha}_{0,+}w g (x)= D^{\alpha}_{0,+}\left(w g\right)'(x)={}^{c}D^{\alpha}_{0,+}\left (w g\right)'(x)+\frac{(w g)'(0)\cdot x^{-\alpha}}{\Gamma(1-\alpha)}.$$
For the $L^{1}$-distance between $D{}^{c}D^{\alpha}_{0,+}w g_{m}$ and $D{}^{c}D^{\alpha}_{0,+}w g $ in interval $[0,\pi]$
we have
\begin{equation}
 ||D{}^{c}D^{\alpha}_{0,+}w g_{m}-D{}^{c}D^{\alpha}_{0,+}w g||_{L^{1}}\leq
\label{est}
\end{equation}
$$ \leq ||{}^{c}D^{\alpha}_{0,+}\left[\left (w g_{m}\right)'-\left (w g\right)'\right]||_{L^{1}}+ ||\left [\left(w g_{m}\right)'(0)-\left(w g\right)'(0)\right]\cdot \frac{x^{-\alpha}}{\Gamma(1-\alpha)}||_{L^{1}}=$$
$$ \leq ||I^{1-\alpha}_{0,+}\left[\left (w g_{m}\right)''-\left (w g\right)''\right]||_{L^{1}}+ \left|\left(w g_{m}\right)'(0)-\left(w g\right)'(0)\right|\cdot ||\frac{x^{-\alpha}}{\Gamma(1-\alpha)}||_{L^{1}}\leq$$
$$ \leq K_{1-\alpha}\cdot || \left (w g_{m}\right)''-\left (w g\right)''||_{L^{1}}+\left|\left(w g_{m}\right)'(0)-\left(w g\right)'(0)\right|\cdot \frac{\pi^{1-\alpha}}{\Gamma(2-\alpha)}\leq $$
$$ \leq K_{1-\alpha}\cdot \sqrt{\pi}\cdot || \left (w g_{m}\right)''-\left (w g\right)''||_{L^{2}}+\left|\left(w g_{m}\right)'(0)-\left(w g\right)'(0) \right|\cdot \frac{\pi^{1-\alpha}}{\Gamma(2-\alpha)}.$$
Because
$$\left (w g_{m}\right)''-\left (w g\right)'' = $$
$$ = w(g''_{m}-g'') + 2 \left (w\right)'\cdot (g'_{m}-g') +\left (w\right)''\cdot (g_{m}-g)$$
we have
$$ || \left (w g_{m}\right)''-\left (w g\right)''||_{L^{2}}\leq$$
$$ \leq ||w||\cdot || g''_{m} - g''||_{L^{2}} + 2 \cdot ||\left (w\right)'||\cdot || g'_{m} - g'||_{L^{2}}+ ||\left (w\right)''||_{L^{2}}\cdot || g_{m} - g||_{L^{2}}.$$
From the assumptions of the lemma it follows that for $j=0,1,2$
$$ \lim_{m\longrightarrow \infty}|| g^{(j)}_{m} - g^{(j)}||_{L^{2}} =0.$$
Hence
$$ \lim_{m\longrightarrow \infty}|| \left (w g_{m}\right)''-\left (w g\right)''||_{L^{2}}=0.$$
In addition
$$ \lim_{m\longrightarrow \infty}\left|\left (w g_{m}\right)'(0)-\left(w g\right)'(0)\right| =$$
$$ = \lim_{m\longrightarrow \infty}\left|\left (w\right)'(0)(g_{m}(0)-g(0)) + w(0)(g'_{m}(0)-g'(0))\right|\leq $$
$$ \leq \lim_{m\longrightarrow \infty}\left|\left (w\right)'(0)(g_{m}(0)-g(0))\right| +  \lim_{m\longrightarrow \infty}\left|w(0)(g'_{m}(0)-g'(0))\right|=0.$$
Taking into account estimation (\ref{est}) and the above inequalities we conclude that (\ref{con4}) is valid.
\end{proof}

\begin{lemma}\label{lem:E}
Let $\alpha\in \left(\frac{1}{2},1\right), \;\; \beta \leq \alpha -\frac{1}{2}  $, function $w$  be positive, even function in  $ [-\pi,\pi]$ and  $w'\in  C^{\beta}_{H}[-\pi,\pi] $. Function $h'$ is the derivative of $h$  defined by assumptions of Lemma \ref{lem:B} and formula \eqref{defh2}, function $g$ is defined as
$$g(x):= h(x)w(x).$$
If we denote as $g_{m}$ the $m$-th sum of the Fourier series
of function $g$, then  the following
convergences are valid in interval $[0,\pi]$
\begin{eqnarray}
&& \lim_{m\longrightarrow \infty}||g'_{m}-g'||=0   \label{cp1}\\
&& \lim_{m\longrightarrow \infty} g'_{m}(0)=g'(0) \label{cp2}\\
&&\lim_{m\longrightarrow \infty} g'_{m}(\pi)=g'(\pi).\label{cp3}
\end{eqnarray}
\end{lemma}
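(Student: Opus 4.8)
The plan is to exhibit $g'$ as a H\"older continuous, $2\pi$-periodic function on $\R$, to recognize the trigonometric polynomials $g'_{m}$ as the partial sums of its Fourier series, and then to invoke the classical fact that the Fourier series of a H\"older continuous function converges uniformly.

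First I would record the regularity of $g=hw$. By formula \eqref{defh2}, $h=I^{1+\alpha}_{0+}\varphi$ with $\varphi:=\gamma-c_{0}-c_{1}x\in C[0,\pi]$, so $h\in C^{1}[0,\pi]$, $h'=I^{\alpha}_{0+}\varphi$, and $h(0)=h(\pi)=0$ (the latter by the normalization \eqref{cn1}, \eqref{cn2}). From the elementary bound
\begin{equation*}
\bigl|I^{\alpha}_{0+}f(x_{2})-I^{\alpha}_{0+}f(x_{1})\bigr|\le\frac{2\,\|f\|_{\infty}}{\Gamma(\alpha+1)}\,|x_{2}-x_{1}|^{\alpha},
\end{equation*}
obtained just as the H\"older estimate for the minimizing sequence in the proof of Theorem~\ref{thm:exist} (with $\|\cdot\|_{\infty}$ in place of $\|\cdot\|_{L^{2}}$), it follows that $I^{\alpha}_{0+}$ maps $C[0,\pi]$ into $C^{\alpha}_{H}[0,\pi]$, so $h'\in C^{\alpha}_{H}[0,\pi]$ and $h$ is Lipschitz on $[0,\pi]$. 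Since $w>0$, $w\in C^{1}$ and $w'\in C^{\beta}_{H}[-\pi,\pi]$ with $\beta\le\alpha-\tfrac12<\alpha$, the product rule yields $g\in C^{1}[0,\pi]$ with $g'=h'w+hw'$; here $h'w$ is a product of a bounded H\"older-$\alpha$ function and a bounded Lipschitz function (hence H\"older-$\alpha$) and $hw'$ is a product of a bounded Lipschitz function and a bounded H\"older-$\beta$ function (hence H\"older-$\beta$), so $g'\in C^{\beta}_{H}[0,\pi]$. (Observe that $g$ itself need not be twice differentiable, $w$ being only of class $C^{1}$, so one cannot simply invoke $H^{2}$-convergence of Fourier series.)

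Next I would continue $g$ to $\R$. As $w$ is even and $h$ is to be extended oddly, $g$ possesses an odd extension to $[-\pi,\pi]$; using $g(0)=g(\pi)=0$ and the H\"older bounds above, this extension is $2\pi$-periodic and of class $C^{1}$ on $\R$, its derivative being the even continuation of $g'$, which again lies in $C^{\beta}_{H}$. The Fourier series of the odd function $g$ is a sine series $g\sim\sum_{k\ge1}b_{k}\sin(kx)$, so $g_{m}=\sum_{k=1}^{m}b_{k}\sin(kx)$ and $g'_{m}=\sum_{k=1}^{m}k\,b_{k}\cos(kx)$; integrating by parts and using $g(0)=g(\pi)=0$ one finds that the cosine Fourier coefficients of $g'$ are precisely $a_{0}=\tfrac{2}{\pi}\bigl(g(\pi)-g(0)\bigr)=0$ and $a_{k}=k\,b_{k}$ for $k\ge1$, so $g'_{m}$ coincides with the $m$-th partial sum of the Fourier series of $g'$.

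Finally I would apply the Dini--Lipschitz criterion: the Fourier partial sums of a H\"older continuous $2\pi$-periodic function converge to it uniformly, hence $\|g'_{m}-g'\|\to0$ on $[0,\pi]$, which is \eqref{cp1}; uniform convergence then gives the pointwise limits \eqref{cp2} and \eqref{cp3} at $x=0$ and $x=\pi$. The main obstacle is the first step, namely proving that $g'$ is genuinely H\"older continuous: this rests on the smoothing property $I^{\alpha}_{0+}\colon C[0,\pi]\to C^{\alpha}_{H}[0,\pi]$ and on careful bookkeeping of the decomposition $g'=h'w+hw'$, and it is exactly here that the hypothesis $w'\in C^{\beta}_{H}$ (rather than just $w'$ continuous) with $\beta\le\alpha-\tfrac12$ is indispensable, since Fourier series of merely continuous functions may diverge. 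One also needs the endpoint vanishing $g(0)=g(\pi)=0$, which is what makes the periodic extension of class $C^{1}$ and allows the identification of $g'_{m}$ with the Fourier partial sums of $g'$.
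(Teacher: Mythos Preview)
Your argument follows essentially the same route as the paper's: establish H\"older continuity of $h'$, decompose $g'=h'w+hw'$ to read off H\"older regularity of $g'$, extend to $[-\pi,\pi]$, and invoke uniform Fourier convergence for H\"older functions. The only differences are cosmetic: you obtain $h'\in C^{\alpha}_{H}$ directly from the sup-norm mapping property of $I^{\alpha}_{0+}$, whereas the paper splits $h'=I^{\alpha}_{0+}\gamma+Ax^{\alpha}+Bx^{1+\alpha}$ and uses an $L^{2}$ estimate (the one from Step~2 of Theorem~\ref{thm:exist}) yielding only $C^{\alpha-1/2}_{H}$ for the first piece; and you cite Dini--Lipschitz where the paper invokes absolute convergence.

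One caveat worth flagging: your assertion that $h(\pi)=0$ ``by the normalization \eqref{cn1}, \eqref{cn2}'' is not justified. Those conditions say $I^{2}_{0+}\varphi(\pi)=0$ and $I^{1}_{0+}\varphi(\pi)=0$, which does not force $h(\pi)=I^{1+\alpha}_{0+}\varphi(\pi)=0$. This matters because without $g(\pi)=0$ the integration-by-parts identity $a_{k}=k\,b_{k}$ acquires a boundary term $\tfrac{2}{\pi}(-1)^{k}g(\pi)$, and then $g'_{m}$ is no longer the $m$-th Fourier partial sum of the even extension of $g'$. The paper's proof leaves this same identification implicit and does not address $g(\pi)$ either, so this is a shared subtlety of the lemma rather than a defect specific to your write-up.
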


\begin{proof}
Definition (\ref{defh2})
in interval $[0,\pi]$ implies for derivative $h'$
\begin{equation}
h'(x)= I^{\alpha}_{0+}\gamma(x)+Ax^{\alpha}+Bx^{1+\alpha},
\end{equation}
where $\gamma\in C[0,\pi]$ and constants $A,B\in \mathbb{R}$ are specified by  conditions \eqref{cn1}, \eqref{cn2} in the proof of Lemma 1. Let us observe that $x^{1+\alpha}\in C^{1}[0,\pi]$, function 
$x^{\alpha}$ is H\"{o}lder continuous  in $[0,\pi]$ with coefficient $\beta \leq \alpha$, thus it can be extended to an odd/even,
 H\"{o}lder continuous function in interval $[-\pi,\pi]$.  In addition $I^{\alpha}_{0+}\gamma(x)$ is H\"{o}lder continuous function in $[0,\pi]$ with coefficient $\beta \leq \alpha-\frac{1}{2}$
because:
$$\frac{|I^{\alpha}_{0+}\gamma(x)-I^{\alpha}_{0+}\gamma(y)|}{|x-y|^{\beta}} \leq \frac{2 \cdot ||\gamma||_{L^{2}}}{\Gamma(\alpha)\sqrt{2\alpha-1}}\cdot |x-y|^{\alpha-\frac{1}{2}-\beta}\leq \frac{2 \cdot ||\gamma||_{L^{2}}}{\Gamma(\alpha)\sqrt{2\alpha-1}}\cdot \pi^{\alpha-\frac{1}{2}-\beta}<\infty$$
and can be extended  to an odd/even,
 H\"{o}lder continuous function in interval $[-\pi,\pi]$.
Observe that for H\"{o}lder continuous functions in $[-\pi,\pi]$ we have the absolute convergence of their
Fourier series.
For function $g' $ we obtain in $[0,\pi]$
$$ g'(x) = h'(x)w(x)+h(x)w'(x).$$
Both  terms on the right-hand side  are by assumption  functions from the $C^{\beta}_{H}[0,\pi]$-space and  can be  extended to  odd/even functions
in the $C^{\beta}_{H}[-\pi,\pi]$ space.  Hence their Fourier series are absolutely convergent in $[-\pi,\pi]$.
Concluding, we have for function $g'$ the convergence in interval $[-\pi,\pi]$
$$ \lim_{m\longrightarrow \infty}||g'_{m}-g'||=0,$$
where $||\cdot || $ denotes the supremum norm
in interval $[-\pi,\pi]$. Thus the sequence  $g'_{m}$ of partial sums is
also absolutely convergent in interval $[0,\pi]$.
Formulas (\ref{cp2},\ref{cp3}) are a  straightforward consequence of this fact.
\end{proof}

% -----------------------------------------------------------------------------

\section*{Acknowledgements}
Research supported under Czestochowa University of Technology project BS/PB-1-105-3010/2011/S (M. Klimek), 
Bialystok
University of Technology grant S/WI/02/2011 (A.B. Malinowska)
and by {\it Center for Research and Development
in Mathematics and Applications}  (University of Aveiro)
and the Portuguese Foundation for Science and Technology
(``FCT --- Funda\c{c}\~{a}o para a Ci\^{e}ncia e a Tecnologia''),
within project PEst-C/MAT/UI4106/2011
with COMPETE number FCOMP-01-0124-FEDER-022690 (T. Odzijewicz).

%------------------------------------------------------------------------------------


\begin{thebibliography}{xx}

\bibitem{AlM09}
Q. M. Al-Mdallal, An efficient method for solving fractional
Sturm--Liouville problems, Chaos Solitons and Fractals, Vol. 40, (2009)
183-189.

\bibitem{QMQ}
Q. M. Al--Madallal,
On the numerical solution of fractional Sturm--Liouville problem,
Int. J. of Comput. Math. 87 (2010), no.~12, 2837--2845.

%\bibitem{Shakoor:01}
%R. Almeida, S. Pooseh\ and\ D. F. M. Torres,
%Fractional variational problems depending on indefinite integrals,
%Nonlinear Anal. 75 (2012), no.~3, 1009--1025.

\bibitem{Isoperimetric}
R. Almeida\ and\ D. F. M. Torres,
Necessary and sufficient conditions for the fractional
calculus of variations with Caputo derivatives,
Commun. Nonlinear Sci. Numer. Simul. 16 (2011), no.~3, 1490--1500.

\bibitem{isi}
K. Balachandran, J. Y. Park\ and\ J. J. Trujillo,
Controllability of nonlinear fractional dynamical systems,
Nonlinear Anal. {\bf 75} (2012), no.~4, 1919--1926.

%\bibitem{Blaszczyk:et:al}
%T. Blaszczyk, M. Ciesielski, M. Klimek\ and\ J. Leszczynski,
%Numerical solution of fractional oscillator equation,
%Appl. Math. and Comput. 218 (2011), no.~6, 2480--2488.
%
%\bibitem{BCGI}
%L. Bourdin,
%Existence of a weak solution for fractional Euler--Lagrange equations,
%J. Math. Anal. Appl. 399 (2013), no.~1, 239--251.
%
%\bibitem{LTDExist}
%L. Bourdin, T. Odzijewicz\ and\ D. F. M. Torres,
%Existence of minimizers for fractional variational problems containing Caputo derivatives,
%Adv. Dyn. Syst. Appl. 8 (2013), no.~1, 1--10.
%
%\bibitem{Cresson}
%J. Cresson,
%Fractional embedding of differential operators and Lagrangian systems,
%J. Math. Phys. 48 (2007), no.~3, 033504, 34~pp.
%
%\bibitem{Gastao}
%G. S. F. Frederico\ and\ D. F. M. Torres,
%Fractional Noether's theorem in the Riesz-Caputo sense,
%Appl. Math. Comput. 217 (2010), no.~3, 1023--1033.

\bibitem{book:GF}
I. M. Gelfand\ and\ S.V. Fomin,
{\it Calculus of Variations},
Dover Publications Inc., New York, 2000.

\bibitem{book:Kilbas}
A. A. Kilbas, H. M. Srivastava\ and\ J. J. Trujillo,
{\it Theory and applications of fractional differential equations},
Elsevier, Amsterdam, 2006.

\bibitem{book:Kiryakova}
V. Kiryakova,
{\it Generalized fractional calculus and applications},
Pitman Research Notes in Mathematics Series,
301, Longman Sci. Tech., Harlow, 1994.

%\bibitem{Malgorzata1}
%M. Klimek,
%Lagrangian fractional mechanics---a noncommutative approach,
%Czechoslovak J. Phys. 55 (2005), no.~11, 1447--1453.

\bibitem{book:Malgorzata}
M. Klimek, {\it
On solutions of linear fractional differential equations of a variational type},
The Publishing Office of Czestochowa University of Technology, Czestochowa, 2009.

\bibitem{Malgorzata2}
M. Klimek\ and\ O. P. Agrawal,
On a regular fractional Sturm--Liouville problem with derivatives of order in $(0,1)$,
Proceedings of the 13th
International Carpathian Control Conference, 28-31 May 2012, Vysoke
Tatry (Podbanske), Slovakia. dx.doi.org/10.1109/CarpathianCC.2012.6228655.

\bibitem{Kli13}
M. Klimek and O.  P.  Agrawal, Regular fractional Sturm--Liouville
problem with generalized derivatives of order in (0,1). Proceedings of the
IFAC Joint Conference: 5th SSSC, 11th WTDA, 5th WFDA,  4-6 February 2013, Grenoble, France.dx.doi.org/10.3182/20130204-3-FR-4032.00170 .

\bibitem{Kli13a}
M. Klimek and O.  P.  Agrawal, Fractional Sturm--Liouville
problem. Comput. Math. Appl. In press. dx.doi.org/10.1016/j.camwa.2012.12.011 (2013).


%
%\bibitem{Lazo}
%M. J. Lazo\ and\ D. F. M. Torres,
%The DuBois--Reymond fundamental lemma of the fractional calculus of variations and an Euler--Lagrange equation involving only derivatives of Caputo,
%J. Optim. Theory Appl. (2012, in press), doi: 10.1007/s10957-012-0203-6.

\bibitem{Lin}
Y. Lin, T. He\ and\ H. Shi,
Existence of positive solutions for Sturm--Liouville BVPs of singular fractional differential equations,
U. P. B. Sci. Bull. 74 (2012), no.~1, Series A.

\bibitem{Leo11}
N.N. Leonenko, M.M. Meerschaert, A.A. Sikorskii, Fractional Pearson diffusion. J. Math Anal. Appl. 403 (2013), no.~2, 532-546.


%\bibitem{Agnieszka}
%A. B. Malinowska\ and\ D. F. M. Torres,
%Fractional calculus of variations for a combined Caputo derivative,
%Fract. Calc. Appl. Anal. 14 (2011), no.~4, 523--537.

\bibitem{book:AD}
A. B. Malinowska\ and\ D. F. M. Torres,
{\it Introduction to the Fractional Calculus of Variations}
Imperial College Press, London, 2012.

%\bibitem{Dorota}
%D. Mozyrska\ and\ D. F. M. Torres,
%Minimal modified energy control for fractional
%linear control systems with the Caputo derivative,
%Carpathian J. Math. 26 (2010), no.~2, 210--221.

\bibitem{Nea09}
A. Neamaty, R. Darzi, A. Dabbaghian, J. Golipoor, Introducing an Iterative
Method for Solving a Special FDE. International Mathematical Forum, Vol.
4, (2009) 1449-1456.

\bibitem{Ovi12}
M. d'Ovidio, From Sturm--Liouville problems to fractional and anomalous
diffusions, Stochastic Processes and their Applications 122 (2012), no.~10, 3513-3544.
dx.doi.org/101016/j.spa.2012.06.002 (2012).


%\bibitem{Tatiana}
%T. Odzijewicz, A. B. Malinowska\ and\ D. F. M. Torres,
%Fractional variational calculus with classical and combined Caputo derivatives,
%Nonlinear Anal. 75 (2012), no.~3, 1507--1515.

\bibitem{book:Podlubny}
I. Podlubny,
{\it Fractional differential equations},
Academic Press, San Diego, CA, 1999.

\bibitem{QCH}
J. Qi\ and\ S. Chen,
Eigenvalue problems of the model from nonlocal continuum mechanics,
J. Math. Phys. 52 (2011), no.~7, 073516, 14pp.
%
%\bibitem{Rabei}
%E. Rabei, I. Rawashdeh, S. Muslih\ and\ D. Baleanu,
%Hamilton-Jacobi formulation for systems in terms of Riesz's fractional derivatives,
%Internat. J. Theoret. Phys. 50 (2011), no.~5, 1569--1576.

\bibitem{CD:Riewe:1996}
F. Riewe,
Nonconservative Lagrangian and Hamiltonian mechanics,
Phys. Rev. E (3) 53 (1996), no.~2, 1890--1899.

%\bibitem{CD:Riewe:1997}
%F. Riewe,
%Mechanics with fractional derivatives,
%Phys. Rev. E (3) 55 (1997), no.~3, part B, 3581--3592.

\bibitem{sagan}
H. Sagan, \textit{Boundary and Eigenvalue Problems in Mathematical Physics}, J. Wiley \&
Sons, Inc, New York, 1961.

\bibitem{book:Samko}
S. G. Samko, A. A. Kilbas\ and\ O. I. Marichev,
\textit{Fractional integrals and derivatives},
translated from the 1987 Russian original, Gordon and Breach, Yverdon, 1993.

\bibitem{book:Brunt}
B. van Brunt,
{\it The Calculus of Variations},
Springer, New York, 2004.
\end{thebibliography}
\end{document}